\newcommand{\bedradyuk}{Bedratyuk2007}
\newcommand{\brouwerone}{BrouwerPopoviciu2010a}
\newcommand{\brouwertwo}{BrouwerPopoviciu2010b}
\newcommand{\CAQU}{CardonaQuer2005}
\newcommand{\CremonaFisher}{CremonaFisher2009}
\newcommand{\cronih}{Croni2002}
\newcommand{\dixmierlazard}{DixmierLazard1985/1986}
\newcommand{\dixmier}{Dixmier1990}
\newcommand{\earle}{Earle1971}
\newcommand{\fuertesgonzalez}{FuertesGonzalez2006}
\newcommand{\gordan}{Gordan1868}
\newcommand{\graceyoung}{GraceYoung1903}
\newcommand{\Hess}{Hess2004}
\newcommand{\huggins}{Huggins2007}
\newcommand{\hugginsphd}{Huggins2005}
\newcommand{\igusa}{Igusa1960}
\newcommand{\KedlayaUmans}{KedlayaUmans2008}
\newcommand{\koizumi}{Koizumi1972}
\newcommand{\LReight}{LercierRitzenthaler2008}
\newcommand{\LReleven}{LercierRitzenthaler2012}
\newcommand{\mestre}{Mestre1991}
\newcommand{\mumfordfogarty}{MumfordFogarty1982}
\newcommand{\olver}{Olver1999}
\newcommand{\serre}{Serre1968}
\newcommand{\shimura}{Shimura1972}
\newcommand{\shioda}{Shioda1967}
\newcommand{\VanRijnswou}{Rijnswou2001}
\newcommand{\vongall}{Gall1888}
\newcommand{\weil}{Weil1956}
\newcommand{\magma}{MAGMA}
\renewcommand{\tilde}{\widetilde}
\theoremstyle{plain}
\newtheorem{theorem}{Theorem}[section]
\newtheorem{proposition}[theorem]{Proposition}
\newtheorem{lemma}[theorem]{Lemma}
\theoremstyle{definition}
\newtheorem{definition}[theorem]{Definition}
\newtheorem{remark}[theorem]{Remark}
\newtheorem{example}[theorem]{Example}
\newtheorem{algorithm}[theorem]{Algorithm}
\newlength{\algindent}\settowidth{\algindent}{\textit{Output}:\hskip0.5em }
\newlength{\alglabel}\settowidth{\alglabel}{\textit{Output}:}
\newcounter{stepcount}
\newenvironment{alglist}
{\quad\begin{list}{\arabic{stepcount}.}{\leftmargin=\algindent\labelwidth=\algindent\itemsep=\smallskipamount\usecounter{stepcount}}}
\newcommand{\algin}{\item[\emph{Input}:]}
\newcommand{\algout}{\item[\emph{Output}:]}
\newcounter{substepcount}
\newenvironment{algsublist}
{\quad\begin{list}{(\/\rlap{\alph{substepcount}}\phantom{d}\/)}{\usecounter{substepcount}}}
{\end{list}}
\newcounter{subsubstepcount}
\newenvironment{algsubsublist}
{\quad\begin{list}{\roman{subsubstepcount}.}{\usecounter{subsubstepcount}}}
{\end{list}}
\DeclareMathOperator{\Aut}{Aut}
\DeclareMathOperator{\Autred}{\overline{Aut}}
\DeclareMathOperator{\Gal}{Gal}
\DeclareMathOperator{\GL}{GL}
\DeclareMathOperator{\Isom}{Isom}
\DeclareMathOperator{\PGL}{PGL}
\DeclareMathOperator{\SL}{SL}
\DeclareMathOperator{\Stab}{Stab}
\DeclareMathOperator{\Sym}{Sym}
\newcommand{\Id}{\textup{id}}
\newcommand{\Qm}{Q}
\newcommand{\Xm}{X}
\newcommand{\FF}{\mathbb{F}}
\newcommand{\CC}{\mathbb{C}}
\newcommand{\NN}{\mathbb{N}}
\newcommand{\PP}{\mathbb{P}}
\newcommand{\QQ}{\mathbb{Q}}
\newcommand{\ZZ}{\mathbb{Z}}
\newcommand{\Qbar}{\bar{\QQ}}
\newcommand{\Fbar}{\bar{F}}
\newcommand{\AG}{\mathbf{A}}
\newcommand{\CG}{\mathbf{C}}
\newcommand{\DG}{\mathbf{D}}
\newcommand{\MG}{\mathbf{M}}
\newcommand{\SG}{\mathbf{S}}
\newcommand{\UG}{\mathbf{U}}
\newcommand{\VG}{\mathbf{V}}
\newcommand{\cc}{\mathcal{C}}
\newcommand{\ii}{\mathcal{I}}
\newcommand{\xx}{\mathcal{X}}
\newcommand{\frakI}{\mathfrak{I}}
\newcommand{\frakf}{\mathfrak{f}}
\newcommand{\frakc}{\mathfrak{c}}
\newcommand{\frakq}{\mathfrak{q}}
\newcommand{\Bord}{B_{\mathrm{order}}}
\newcommand{\Bdeg}{B_{\mathrm{degree}}}
\newcommand{\Bsing}{B_{\mathrm{singular}}}
\newcommand{\mybar}[1]{
  \mathchoice
  {#1\llap{$\overline{\phantom{\displaystyle\rm#1}}$}}
  {#1\llap{$\overline{\phantom{\textstyle\rm#1}}$}}
  {#1\llap{$\overline{\phantom{\scriptstyle\rm#1}}$}}
  {#1\llap{$\overline{\phantom{\scriptscriptstyle\rm#1}}$}}
}  
\renewcommand{\bar}{\mybar}
\title[Isomorphisms and field descent of hyperelliptic curves]
      {Fast computation of isomorphisms of hyperelliptic curves
       and explicit Galois descent}
\author{Reynald Lercier}
\address{DGA MI, La Roche Marguerite, 35174 Bruz, France;
         IRMAR, Universit\'e de Rennes~1, Campus de Beaulieu, 35042 Rennes, France} 
\email{reynald.lercier@m4x.org}
\author{Christophe Ritzenthaler}
\address{Institut de Math{\'e}matiques de Luminy, UMR 6206 du CNRS, Luminy, Case 907, 13288 Marseille, France}
\email{ritzenth@iml.univ-mrs.fr}
\author{Jeroen Sijsling}
\address{IRMAR, Universit\'e de Rennes~1, Campus de Beaulieu, 35042 Rennes, France} 
\email{sijsling@gmail.com}
\subjclass[2010]{Primary 13A50; Secondary 14Q05, 14H10, 14H25}
\keywords{invariants ; covariants ; hyperelliptic curves ; binary form ; Galois descent ; isomorphism ;
moduli ; algorithm}
\begin{document}

\begin{abstract}
We show how to speed up the computation of isomorphisms of hyperelliptic
curves by using covariants. We also obtain new theoretical and practical
results concerning models of these curves over their field of moduli.
\end{abstract}

\maketitle

\section{Introduction}

Let $\Xm_1$ and $\Xm_2$ be two curves of genus $g \geq 2$ over a
field~$k$. We wish to quickly determine the (possibly empty) set 
of isomorphisms between them.  The standard strategy mainly consists
of interpolating the isomorphisms at Weierstrass or small degree places,
depending on whether the characteristic of the field is zero or
positive~\cite{\Hess}. This yields algorithms of complexity at least
$O(g^6)$ in general, and at least $O(g^2)$ even in very favorable cases.

In this article we restrict to hyperelliptic curves with equations 
$\Xm_i\colon y^2=f_i(x)$ over a field $k$ of characteristic
different from~$2$. The issue can then be rephrased in terms of 
isomorphisms of degree $2g+2$ polynomials under the M\"obius action
of $\GL_2(k)$ (see Section \ref{sec:iso-general-fac}).
Our first contribution is to show how to compute the set of isomorphisms
in a much faster way by combining two new ideas. The first one uses the 
factorization of the M\"obius action into a diagonal matrix times a
second matrix whose diagonal coefficients are equal to~$1$. 
This idea allows us to perform the computation of the isomorphisms 
with only univariate polynomial calculations (see 
Section~\ref{sec:classical-approach}). The second idea
relies on a classical generalization of invariants, called 
\emph{covariants} (see Section~\ref{sec:other-geom-covar}). 
Using covariants, we can reduce our search for an isomorphism between 
$f_1$ and $f_2$ to the search for an isomorphism between polynomials of 
lower degree. This gives us an algorithm for generic hyperelliptic curves 
whose complexity is quasi-linear in $g$ (see 
Section~\ref{sec:gener-exampl-genus}). 
In the genus-$2$ and genus-$3$ cases, we analyze the
small locus of curves where our strategy fails (see Section
\ref{sec:all-hyper-genus}). The use of covariants was inspired by
work of van Rijnswou~\cite{\VanRijnswou}, who used
covariants, along with a miraculous isomorphism from representation theory,
to generically reduce the isomorphism question for ternary quartics
to that for binary quartics.

In a related direction, thanks to covariants, we get both theoretical and
practical results on Galois descent of hyperelliptic curves over their
field of moduli. As the terminology suggests, this issue is related to 
moduli spaces, namely as follows.

The use of invariants allows the construction of the coarse
moduli space of smooth curves admitting a suitable representation
(for example, hyperelliptic or planar) as a geometric quotient in 
the sense of Mumford~\cite{\mumfordfogarty}.  Such quotients have
been calculated explicitly; for instance, for genus-$2$ and 
genus-$3$ hyperelliptic curves, see~\cite{\igusa,\shioda}. 
Given a field~$k$, the $k$-points of these quotients correspond
to curves whose field of moduli, in the sense of
Definition~\ref{def:moduli}, is equal to~$k$ (up to a possible 
purely inseparable extension). This statement is probably well-known,
but we could not find it in the literature; therefore, we give
the link between these two definitions in Section~\ref{sec:desc-algebr-curv}.

A natural question is to determine when a curve descends to its
field of moduli, that is, when its field of moduli is also a field
of definition (and hence the smallest possible field of definition,
under inclusion).  Examples of curves that do not so descend 
were  constructed by Shimura~\cite{\shimura} and Earle~\cite{\earle},
among others.  However, curves of genus at most $1$ always descend
to their field of moduli, and models over the field of moduli can
be explicitly constructed.  Moreover, in the genus-$2$ case, although 
an obstruction to the descent may exist, as is shown in~\cite{\mestre}
and~\cite{\CAQU}, the question of explicit descent to the field of
moduli is solved. One of our aims is to obtain similar results in the
general hyperelliptic case.

Many theoretical results for the general case can be found
in~\cite{\huggins}.  In practice, though, computing an explicit model
of a given curve over its field of moduli can be a very hard task, as we
explain in Section~\ref{sec:classical-approach-2}. 
Indeed, for a given \emph{finite} Galois extension,  
Weil's criterion in~\cite{\weil} often leads to a computational answer;
the main difficulty in our context is to work out the
finite Galois extension over which a descent isomorphism is defined.  
As far as we know, there is no easy general way to find this
extension, except when $k$ is finite or when the geometric 
automorphism group of the curve is trivial.  Moreover, for
hyperelliptic curves there is a refinement of the descent
question --- namely, to ask for a descent to a model of the 
form $y^2=f(x)$ --- and this introduces additional difficulties. 

The `magic' of the covariant method is to reduce the descent problem
to lower genus, where a solution may be easier to determine 
(Theorem \ref{th:covdescent}). In the genus-$1$ case, for example,
there is always an explicit model over the field of moduli and we can 
quickly determine a descent isomorphism to this model, thanks to the
first part of our work. It turns out that in suitable cases, this descent
induces a descent of the original hyperelliptic curve to its field of moduli.

We illustrate this descent to the field of moduli for genus-$3$
hyperelliptic curves with automorphism group $(\ZZ/2\ZZ)^3$,
a case which remained unsolved in~\cite{\LReleven};
see Section~\ref{sec:c23-case}. 
We also look at the case of genus-$3$ hyperelliptic curves
with automorphism group $(\ZZ/2\ZZ)^2$;
in this case the field of moduli is not always a field of definition,
and we prove that we can always find a model over an at most quadratic
extension of the field of moduli.  Finally,
in Section~\ref{sec:fuertes-examples} we show that our method
can be used to descend families of curves with the example of a
$3$-dimensional family of genus-$5$
hyperelliptic curves from \cite{\fuertesgonzalez}.

We stress that we are merely beginning to exploit the full strength of these
new ideas. An article on nonhyperelliptic curves is in progress. We are also
developing a general version of van Rijnswou's algorithms that is much more
effective over finite fields and number fields. Finally, we seek to obtain new
theoretical and practical descent results by analyzing the influence of twists
on covariants.

We have implemented our algorithms in Magma~\cite{\magma}; the resulting
programs, together with other useful scripts and some output that was too
large to include in this paper, may be found online at

\centerline{\url{http://perso.univ-rennes1.fr/christophe.ritzenthaler/programme/hyp-desc.tgz}}

\subsubsection*{Notation}

In the following, $k$ denotes a field of characteristic $p$ (prime or $0$)
with algebraic closure $K$. Hyperelliptic curves are additionally assumed to
be smooth, so that when a singular affine model of a curve is given, we
actually consider its desingularization.  Unless noted otherwise,
(iso)morphisms are defined over the base field $k$.  We use the following
notation for groups: $\CG_n=\ZZ/n\ZZ$; $\DG_{2n}$ is the dihedral group with
$2n$ elements; $\UG_6$ is the group with $24$ elements defined by 
$\langle S,T\rangle$ with $S^{12}=T^2=1$ and $TST=S^5$; $\VG_8$ is the group with $32$
elements defined by $\langle S,T\rangle$ with $S^4=T^8=(ST)^2=(S^{-1}T)^2=1;$
$\SG_n$ is the symmetric group over $n$ symbols. Finally, 
if $f_1$ and $f_2$ are polynomials or matrices or some other such 
objects over a field~$k$, we will write
$f_1\sim f_2$ if there exists $\lambda \in k^*$ such that $f_1=\lambda \cdot f_2$.

\section{Isomorphisms between forms and hyperelliptic curves}

\subsection{Isomorphisms of binary forms}

Let $n \geq 1$ be an integer, let $V=k^2$ be the $k$-vector space
with basis $(x,z)$, and let $S^n(V)$ be the $(n+1)$-dimensional
vector space of homogeneous forms $\sum_{i=0}^n a_i x^i z^{n-i}$
of degree $n$ in $(x,z)$. In the sequel, we call an element of
$S^n(V)$ a \emph{\textup{(}binary\textup{)} form}. When $n=0$, we let $S^0(V)=k$.
Let $G$ be a subgroup of $\GL_2(k)$ and let $M$ be an element of~$G$.
If $f$ is a form in $S^n(V)$, we define $M.f$ by 
$(M.f) (x,z)=f(M^{-1} (x,z))$, where the action of a matrix on
$(x,z)$ is the standard action on ${^t (x,z)}$.

\begin{definition}
Let $f_1,f_2$ be forms of degree $n \geq 1$ over a field $k$.
We denote by $\Isom(f_1,f_2) \subset \PGL_2(k)$ the set of matrices
$M$ up to scalar equivalence such that $M. f_1\sim f_2$. 
Additionally, we write $\Aut f_1$ for $\Isom(f_1,f_1)$.
\end{definition}

If $\Isom(f_1,f_2) \ne \emptyset$, this set is a principal
homogeneous space over $\Aut f_1 $. In particular,
$\Isom(f_1,f_2)=M  \Aut f_1 $ for any $M \in \Isom(f_1,f_2)$.

Let $f$ be a form of degree $n$ over~$k$. Over $K$,
we can write $f =\prod_{i=1}^s (\alpha_i x-\beta_i z)^{n_i}$, where 
$(\alpha_i,\beta_i) \in K^2 \setminus \{(0,0)\}$ and $n_i \in \NN$. 
We associate to such a form its
squarefree part $\tilde{f} = \prod_{i=1}^s (\alpha_i x-\beta_i z)$,
which is defined up to a multiplicative constant.  
The action of $M$ on $f$ reflects the classical M\"obius action
of $\PGL_2(K)$ on the roots $(\alpha_i : \beta_i) \in \PP_K^1$ of~$f$.
In particular, two forms of the same degree are $K$-isomorphic if and only
if there exists an $M \in \GL_2(K)$ mapping the roots of the first form 
to the roots of the second form (counting multiplicities). Hence we have:

\begin{lemma}
The group $\Aut_K f $ is finite if and only if $s \geq 3$, that is,
if and only if $\deg(\tilde{f}) \geq 3$. 
Moreover, $\Aut_K f  \subset \Aut_K \tilde{f}.$
\end{lemma}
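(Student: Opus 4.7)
The plan rests on the classical fact that the M\"obius action of $\PGL_2(K)$ on $\PP^1_K$ is sharply $3$-transitive, together with the observation already made in the text: if $M.f \sim f$, then $M$ permutes the multiset of roots of $f$ on $\PP^1_K$.

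First I would prove the inclusion $\Aut_K(f) \subset \Aut_K(\tilde{f})$. If $M \in \Aut_K(f)$, then $M$ permutes the roots of $f$ together with their multiplicities, and therefore permutes the underlying set $\{(\alpha_i:\beta_i) : 1 \leq i \leq s\}$ of distinct roots. This set is precisely the set of roots of $\tilde{f}$, each appearing with multiplicity $1$. Hence $M.\tilde{f}$ has the same roots with the same multiplicities as $\tilde{f}$, and being of the same degree as $\tilde{f}$, it must be a scalar multiple of $\tilde{f}$, so $M \in \Aut_K(\tilde{f})$.

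For the finiteness statement, I would argue both directions. If $s \geq 3$, pick three distinct roots $(\alpha_{i_1}:\beta_{i_1}), (\alpha_{i_2}:\beta_{i_2}), (\alpha_{i_3}:\beta_{i_3})$ of $\tilde{f}$. Any $M \in \Aut_K(f) \subset \Aut_K(\tilde{f})$ sends this ordered triple to another ordered triple of distinct roots, and by the $3$-transitivity of $\PGL_2(K)$ it is determined by this image. Since there are at most $s(s-1)(s-2)$ such ordered triples, $\Aut_K(f)$ is finite. Conversely, suppose $s \leq 2$. If $s=1$, then $f = (\alpha_1 x - \beta_1 z)^n$ and every $M \in \PGL_2(K)$ that fixes the point $(\alpha_1:\beta_1) \in \PP^1_K$ sends $f$ to a scalar multiple of itself; the stabilizer of a point in $\PGL_2(K)$ is a Borel subgroup, which is infinite. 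If $s = 2$, then every $M$ that fixes both roots preserves the multiplicity pattern and lies in $\Aut_K(f)$; the pointwise stabilizer of two distinct points in $\PGL_2(K)$ is a maximal torus, again infinite. In either case $\Aut_K(f)$ is infinite.

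No step presents a real obstacle: the whole argument is a bookkeeping translation of the M\"obius action on $\PP^1$. The only mild care needed is to remember that the action of $M$ must preserve multiplicities, not just the set of roots, but this is automatic because $f$ and $M.f$ have the same degree and the same multiset of roots up to the $\PGL_2$ action, so preservation of the root set forces preservation of multiplicities once we know $M.f \sim f$.
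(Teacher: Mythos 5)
Your proof is correct and follows exactly the route the paper intends: the lemma is stated there as an immediate consequence ("Hence") of the preceding observation that the $\GL_2$ action on forms corresponds to the M\"obius action on the (multi)sets of roots, and your write-up simply fills in the details via sharp $3$-transitivity of $\PGL_2(K)$ and the infinitude of point stabilizers (Borel, resp.\ torus) when $s \leq 2$. No gaps.
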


\subsection{The direct approach}
\label{sec:classical-approach}

The classical method to compute isomorphisms between two binary forms 
$f_1$, $f_2$ of degree $n$ over a field $k$ is to find a 
$\PGL_2(k)$-transformation of $\PP^1$ which maps the roots of the 
first form to the root of the second form.  The most time-consuming
task is to compute an isomorphism between the splitting fields of
$f_1$ and~$f_2$.  Even in the most favorable case, where
$k$ is a finite field, the fastest algorithms need at least
$O(n^{2,5+o(1)})$ operations in $k$ (see~\cite{\KedlayaUmans}).

We show here that it is actually possible to get rid of this
cumbersome ring isomorphism computation, and describe an
algorithm of time complexity only quasilinear in~$n$. This algorithm
takes as input binary forms $f_1 = \sum_i A_i x^i z^{n-i} $ and 
$f_2 = \sum_i B_i x^i z^{n-i}$ of equal degree $n \geq 3$,
each having at least three distinct roots.  
It returns matrices representing the elements of $\Isom(f_1,f_2)$.

First, we suppose that the coefficient 
$A_{n-1}$ is equal to zero. Note that this is
typically not a big restriction, since we may apply
linear transformations to~$f_1$.  A notable exception
is when $p$ divides~$n$. We therefore assume that $p$ is prime to $n$.

Second, we note that determining $\Isom(f_1,f_2)$ is equivalent to determining the
matrices $M = (m_{i,j}) \in \GL_2 (k)$ such that
\begin{equation}
\label{eq:Meq}
  f_2 (m_{11} x + m_{12} z, m_{21} x + m_{22} z) 
   = \lambda f_1 (x,z) \text{ \quad for some } \lambda \in k^*.
\end{equation}

Third, because of homogeneity, we may suppose that the $\lambda$ in
Equation~\eqref{eq:Meq} equals $1$, after enlarging $k$ by a radical extension if
necessary.  Note that though this radical extension is \emph{a priori} unknown, the
details of the algorithm below will show how it can be determined.

Finally, we may suppose that the $M$ in Equation~\eqref{eq:Meq} are of the form
$$
  M = \begin{bmatrix}
               1/\alpha & \beta/\delta \\
          \gamma/\alpha &     1/\delta 
      \end{bmatrix}.
$$
Of course this may not be true, because a zero may occur on the diagonal of
one of these $M$.  However, one can fix this situation by applying a suitable
change of variables to~$f_2$.

The equation $f_2(m_{11} x + m_{12} z, m_{21} x + m_{22} z) = f_1(x, z)$
now becomes
$$f_2(x + \beta z, \gamma x + z) = f_1(\alpha x, \delta z).$$
Equating the coefficients of $x^n$ in both sides of this equation yields
$A_n  \alpha^n = f_2(1, \gamma),$
and we can write $\alpha^n$ in terms of $\gamma$.  Similarly, the
equality of the coefficients of $x^{n-1} z$,
$$
  \beta   \frac{\partial f_2}{\partial x}(1, \gamma)
    +     \frac{\partial f_2}{\partial z}(1, \gamma)  = 0,
$$
enables us to write $\beta$ in term of $\gamma$ too. 
More generally, equating the coefficients of $x^{n-i} z^i$
for $i=2,\ldots, n$, where we substitute $\alpha^n$ and $\beta$ 
in term of $\gamma$, yields $n-1$ equations of the form\
\begin{multline}
\label{eq:4}
A_n \biggl( \sum_{j=0}^{i} \binom {i} {j}
      \left( -\frac {\partial   f_2}{\partial z}\right)^j 
      \left(  \frac {\partial   f_2}{\partial x}\right)^{i-j} 
              \frac {\partial^i f_2}{\partial x^j \partial{z}^{i-j}} \biggr)
              (1, \gamma) \\
 = i! \left(\frac {\partial f_2}{\partial x}(1, \gamma)\right)^{i}
      \left(\frac {\delta}{\alpha}\right)^i
             f_2(1, \gamma) .
\end{multline}
Note that the left hand side of Equation~\eqref{eq:4} is actually a 
polynomial multiple of $f_2(x, z)$, and we can divide both sides by 
$f_2(1,\gamma)$ --- see~\cite[Chapter 1, \S\S15--16]{\graceyoung} 
for an elegant explanation.  This yields equations of degree $i(n-2)$ 
in $\gamma$ for the left side and of degree $i(n-1)$ in $\gamma$ and 
degree $i$ in $\delta/\alpha$ on the right side.

Now, dividing the square of Equation~\eqref{eq:4} specialized at $i=3$ 
by the cube of Equation~\eqref{eq:4} specialized at $i=2$ allows to 
eliminate, up to some constant, the right hand side of these equations, 
in particular the unknown $\delta/\alpha$. We end up with an equation 
of degree $6(n-2)$ in $\gamma$.  Similarly, when $n>3$, dividing 
Equation~\eqref{eq:4} specialized at $i=4$ by the square of 
Equation~\eqref{eq:4} specialized at $i=2$ yields an equation of degree 
$4(n-2)$ in $\gamma$. Taking the $\gcd$, we obtain a polynomial of low 
degree with root $\gamma$. Generically, this $\gcd$ is of degree $1$.

Under the assumptions made, the algorithm is therefore straightforward.  For
each possible $\gamma$, we compute $\alpha,\beta$ and $\delta$ and check
whether the resulting matrix is in $\Isom(f_1,f_2)$.

The computations involved in this algorithm (taking $\gcd$s of polynomials of
degree $O(n)$, taking $n$-th roots, and so forth) are all of time complexity
quasilinear in~$n$.

We have implemented the algorithm in Magma (version 2.18-2)
and have timed the resulting procedure, \texttt{IsGL2EquivFast}, 
on a laptop (based on an Intel Core i7 M620 2.67GHz processor)
for irreducible forms of increasing degree, the most favorable case 
for the native Magma routine \texttt{IsGL2Equivalent}.
We compare with \texttt{IsGL2Equivalent}, which implements the
classical method, first over the finite field $\FF_{10007}$, 
then over the rationals with coefficients bounded by $\pm 2$. 
The results are in Table~\ref{tab:isgl2equiv}.
(See Section~\ref{sec:gener-exampl-genus}
for the definition of \texttt{IsGL2EquivCovariant}.)

\begin{table}[htbp]
\begin{center}
\begin{tabular}{rlcr@{}lr@{}lr@{}lccr@{}lr@{}lr@{}l}
\toprule
&&&\multicolumn{7}{c}{Computations over $\FF_{10007}$}&&\multicolumn{6}{c}{Computations over $\QQ$}\\
\cmidrule(lr){4-10}\cmidrule(lr){12-17}
\multicolumn{2}{c}{Genus}&\hbox to 1em{}& \multicolumn{2}{r}{Old} & \multicolumn{2}{r}{\S\ref{sec:classical-approach}} & \multicolumn{2}{r}{\S\ref{sec:gener-exampl-genus}}&
                         &              & \multicolumn{2}{r}{Old} & \multicolumn{2}{r}{\S\ref{sec:classical-approach}} & \multicolumn{2}{r}{\S\ref{sec:gener-exampl-genus}}\\
\midrule
   1 &&&        0 & .0 &   0 & .0 & \ 0 & .0 &&&    0 & .0 &    0 & .0 & \ 0 & .0 \\
   2 &&&        0 & .0 &   0 & .0 &   0 & .0 &&&    0 & .0 &    0 & .0 &   0 & .0 \\
   4 &&&        0 & .0 &   0 & .0 &   0 & .0 &&&    0 & .4 &    0 & .0 &   0 & .0 \\
   8 &&&        0 & .0 &   0 & .0 &   0 & .0 &&&   15 &    &    0 & .0 &   0 & .0 \\
  16 &&&        0 & .1 &   0 & .0 &   0 & .0 &&& 1150 &    &    0 & .1 &   0 & .0 \\
  32 &&&        0 & .2 &   0 & .0 &   0 & .0 &&& ---  &    &    0 & .2 &   0 & .0 \\
  64 &&&        0 & .9 &   0 & .1 &   0 & .0 &&& ---  &    &    0 & .6 &   0 & .0 \\
 128 &&&        6 & .5 &   0 & .6 &   0 & .0 &&& ---  &    &    3 &    &   0 & .2 \\
 256 &&&       39 &    &   3 & .7 &   0 & .1 &&& ---  &    &   30 &    &   0 & .6 \\
 512 &&&      242 &    &  25 &    &   0 & .5 &&& ---  &    &  382 &    &   3 & .4 \\
1024 &&& \ \ 1560 &    & 165 &    &   2 & .5 &&& ---  &    & 5850 &    &   7 &    \\
\bottomrule
\end{tabular}
\bigskip
\caption{Timings (in seconds) for isomorphisms between forms of degree $2g+2$,
over $\FF_{10007}$ and over $\QQ$.
The columns labeled `Old' give timings for Magma's built-in function
\texttt{IsGL2Equivalent}; the columns labeled `\S\ref{sec:classical-approach}'
give timings for the function \texttt{IsGL2EquivFast} described in
Section \ref{sec:classical-approach}; and the columns labeled 
`\S\ref{sec:gener-exampl-genus}' give timings for the function 
\texttt{IsGL2EquivCovariant} described in Section \ref{sec:gener-exampl-genus}.
Entries of `---' indicate computations that were aborted after an hour.
}
\label{tab:isgl2equiv}
\end{center}
\end{table}

As concluding remarks, we note first of all that this algorithm 
is equally suitable for determining $K$-isomorphisms. 
Moreover, in the special case of binary quartics,
it is just as efficient as the algorithm given in~\cite{\CremonaFisher}.

\subsection{The covariant approach}
\label{sec:other-geom-covar}

Let $k$ be an infinite field of characteristic $p$ and 
let $n>1$ be an integer.

\begin{definition} 
Let $r \geq 0$ be an integer. A  homogeneous polynomial 
function $C\colon S^n(V) \to S^r(V)$ of degree $d$ is 
a \emph{covariant} if there exists $\omega \in \ZZ$ 
such that for all $M \in G$ and all $f \in S^n(V)$, we have 
$$C(M.f)= (\det M)^{-\omega} \cdot M.C(f).$$
When $r=0$, such a $C$ is called a (relative)
\emph{invariant} and is denoted by~$I$.
\end{definition}

The integer $r$ is called the \emph{order} of the covariant. 
If $nd-r$ is odd, the covariant is necessarily zero. 
Otherwise the integer $\omega$ is unique,
and is called the \emph{weight} of the covariant.
It is equal to $(nd-r)/2$. In the sequel, we often 
identify $C$ with $C(f)$ for a general form $f \in F(a_0,\ldots,a_n)[x,z]$,
where $F$ is the prime field of $k$.  For instance, the identity 
function $S^n(V) \to S^n(V)$ is a covariant of degree $1$ and 
order $n$ that we identify with $f$ itself.

\begin{remark}
The determinant factor prevents the addition of covariants of different
weights when $G=\GL_2(K)$. Hence one generally studies the graded
algebra $\cc_n$ of covariants and $\ii_n$ of invariants under the 
action of $\SL_2(K)$. It is easy to see that the homogeneous elements
of $\cc_n$ and $\ii_n$ are actually all the covariants or invariants
under the action of $\GL_2(K)$.  Despite this ambiguity, in the rest
of the article we work with $G=\GL_2(K)$ instead of $\SL_2(K)$ because,
in practice, this choice often allows us to avoid a quadratic extension
of $k$ when looking for an isomorphism $M$ between two forms.
\end{remark}

There is a large literature on how to generate invariants
and covariants starting from~$f$. Gordan's algorithm \cite{\gordan}
allows to find a set of generators for the algebras $\cc_n$ and 
$\ii_n$ thanks to the use of certain differential operators, called
\emph{$h$-transvectants} and defined as follows.
Given two covariants $C_1, C_2$ of degree $d_1, d_2$ and 
of order $r_1, r_2$, and given an integer $h \geq 1$,
we can create a new covariant denoted $(C_1,C_2)_h$
and usually defined as \cite[p.~88]{\olver}
$$\frac{(r_1-h)!(r_2-h)!}{r_1!r_2!} 
   \sum_{i=0}^h\ (-1)^{i}\ \binom{h}{i}\ \frac{\partial^{h} C_1}{\partial x^{h-i} \partial z^i}
                                       \ \frac{\partial^{h} C_2}{\partial x^i\partial z^{h-i}}.$$
In practice, we use the univariate counterpart.
Looking at $C_1$, $C_2$ as univariate polynomials in $x/z$,
we get \cite[Theorem~5.6]{\olver}
\begin{equation} 
\label{eq:transvectant}
h! \frac{ (r_1-h)! (r_2-h)!}{r_1! r_2!}
  \sum_{i=0}^h\ (-1)^{i}\ \binom{r_1-i}{h-i}\ \binom{r_2 - h+i}{i}
                        \ \frac{d^{h-i} C_1}{d x^{h-i}}\ \frac{d^i C_2}{d x^i}.
\end{equation}
Effective methods for computing sets of generators when $K=\CC$
have been worked out for $n$ up to $10$ (see 
\cite{\dixmier,\vongall,\dixmierlazard,\bedradyuk,\shioda,\cronih,\brouwerone,\brouwertwo}).
It has been shown that if $\CC$ is replaced by 
an algebraically closed field $K$ of characteristic $p$,
these computations are still valid for $g=2$ if $p \ne 2,3,5$ \cite{\LReight} 
and for $g=3$ if $p \ne 2,3,5,7$ \cite{\LReleven}.

Our second idea to compute isomorphisms between forms of a given degree is to
reduce the question to smaller degree by using covariants. Indeed, the
following observation is a simple consequence of the definition
itself.

\begin{proposition}
\label{prop:ftoc}
Let $f_1,f_2$ be forms of even degree
$n$ over a field $k$.  Let $C$ be a covariant of order $r$ for binary forms
of degree $n$, defined over the prime field of $k$, and let $c_i=C(f_i) \in
S^r(V)$.  Then $\Isom(f_1,f_2) \subset \Isom(c_1,c_2).$ \qed
\end{proposition}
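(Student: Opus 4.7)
The plan is essentially to unwind the definition of a covariant, exactly as the paper hints ("a simple consequence of the definition itself"). So I start by picking an arbitrary $M \in \Isom(f_1,f_2) \subset \PGL_2(k)$, lift it to a matrix in $\GL_2(k)$, and translate membership in $\Isom(f_1,f_2)$ into the equation $M.f_1 = \mu f_2$ for some $\mu \in k^*$. The goal then is to show that this same $M$ satisfies $M.c_1 \sim c_2$, that is, $M.c_1 = \nu c_2$ for some $\nu \in k^*$, which is exactly what membership in $\Isom(c_1,c_2)$ requires.

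Next I apply the covariant $C$ to both sides of $M.f_1 = \mu f_2$ and use two facts in parallel. On the one hand, Definition \ref{def:covariants} applied to $f = f_1$ gives
\begin{equation*}
C(M.f_1) = \det(M)^{-\omega}\, M.C(f_1) = \det(M)^{-\omega}\, M.c_1,
\end{equation*}
where $\omega$ is the weight attached to $C$. On the other hand, $C$ is a homogeneous polynomial function of degree $d$ on $S^n(V)$, so
\begin{equation*}
C(\mu f_2) = \mu^{d}\, C(f_2) = \mu^{d}\, c_2.
\end{equation*}
Equating these two expressions yields $M.c_1 = \mu^{d}\det(M)^{\omega}\, c_2$, and since $\mu^d\det(M)^\omega \in k^*$, this is precisely the relation $M.c_1 \sim c_2$. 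Thus $M \in \Isom(c_1,c_2)$, giving the desired inclusion.

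There is really no serious obstacle: the only things to check are (i) that $C$ is defined over the prime field of $k$ so that the equalities above actually take place in $S^r(V)$ viewed over $k$ (this is given by hypothesis), and (ii) that the scaling action by $\mu \in k^*$ commutes cleanly with $C$, which is just homogeneity of degree $d$. No use is made of the parity of $n$ or of the irreducibility/squarefreeness of the $f_i$; the hypothesis that $n$ is even is just context from the hyperelliptic setting, where the covariant $C$ would typically come from an $\SL_2(K)$-generator of $\cc_n$. One mild point worth stating explicitly in the write-up is that if $c_1 = 0$ (equivalently $c_2 = 0$, since $C(f_2) = \mu^{-d}\det(M)^{-\omega}M.c_1$), then $\Isom(c_1,c_2) = \PGL_2(k)$ by convention and the inclusion is trivial; otherwise the relation $M.c_1 = \nu c_2$ with $\nu \neq 0$ is a genuine nontrivial constraint on $M$, which is exactly what makes the proposition useful as a reduction tool in the sections that follow.
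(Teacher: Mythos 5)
Your proof is correct and is exactly the argument the paper has in mind: the authors give no written proof, remarking only that the proposition ``is a simple consequence of the definition itself,'' and your computation $C(M.f_1)=\det(M)^{-\omega}M.c_1$ versus $C(\mu f_2)=\mu^d c_2$ is precisely that unwinding. The side remarks (homogeneity handling the scalar $\mu$, and the degenerate case $c_1=0$) are accurate and harmless.
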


We illustrate this idea and study its limitations with the
computation of isomorphisms for forms and hyperelliptic curves 
in Sections~\ref{sec:gener-exampl-genus} and~\ref{sec:applications}.
As we want the covariants $c_i$ to have the smallest degree
possible and $\Isom(c_1,c_2)$ to be finite, we want that 
$\deg(\tilde{c_i}) \geq 3$. 
Actually, in what follows we mostly deal with forms of even degree,
so nonzero covariants will be of even order, and the smallest 
degree meeting our restriction is then $4$.

Consider a binary quartic
$q = a_4 x^4 + a_3 x^3 z + a_2 x^2 z^2 + a_1 x z^3 + a_0 z^4$
over $k$ with $p \ne 2,3$. We define
\begin{alignat*}{2}
  I&=I(q) &&= 12 a_4 a_0  - 3 a_3 a_1 + a_2^2 \\
  J&=J(q) &&= 72a_4 a_2 a_0 + 9 a_3 a_2 a_1- 27 a_4 a_1^2 - 27 a_0 a_3^2 - 2a_2^3 
\end{alignat*}
as in \cite{\CremonaFisher}. The form $q$ has distinct roots if and only if
$\Delta=4 I^3 - J^2 \ne 0$.  Given $I,J \in K$ such that $\Delta \ne 0$, one
can easily reconstruct a form with at least three distinct roots which is
$K$-isomorphic to $q$. We can take
\begin{equation}
\label{quarticsplit}
q = \begin{cases}
     x^3 z - 27 ({I^3}/{J^2}) x z^3 - 27 ({I^3}/{J^2})  z^4 & \quad \text{ if }\ J\ne 0,\\
     x^3 z+x z^3 & \quad \text{ otherwise.}
\end{cases}     
\end{equation}

Concerning the geometric automorphisms of binary quartics, we have the
following easy result, for which we could not find a reference.

\begin{proposition}
\label{prop:autquartics}
Let $q$ be a binary quartic form over $K$, with invariants $I$ and~$J$.
Suppose that $\Delta \ne 0$. Then
\begin{equation}
\label{binquadaut}
  \Aut q \cong  \begin{cases}
                \AG_4 & \text{if $I = 0$},\\
                \DG_8 & \text{if $J = 0$},\\
                \DG_4 & \text{otherwise.}
                \end{cases}
\end{equation}
\end{proposition}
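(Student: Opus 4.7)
The plan is to normalize $q$ via the $\PGL_2(K)$-action, reduce to a one-parameter family, and then analyze the stabilizer of that parameter.

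Since $K$ is algebraically closed and $q$ has four distinct roots (because $\Delta \neq 0$), the $3$-transitivity of $\PGL_2(K)$ on $\PP^1_K$ lets us assume, up to a $\PGL_2(K)$-change of variables, that $q \sim x z (x-z)(x-\lambda z)$ for some $\lambda \in K \setminus \{0,1\}$. Because $\PGL_2(K)$ is sharply $3$-transitive, the natural map $\Aut(q) \hookrightarrow \SG_4$ induced by the action on $\{0,1,\infty,\lambda\}$ is injective, and a permutation $\sigma \in \SG_4$ is in its image precisely when it fixes the cross-ratio $[0,1,\infty,\lambda] = \lambda$.

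Next I would exploit the classical observation that the Klein four-subgroup $V \subset \SG_4$ of double transpositions always preserves the cross-ratio, so $V \cong \DG_4$ sits inside $\Aut(q)$ for every $\lambda$. The quotient $\SG_4/V \cong \SG_3$ acts on the cross-ratio through the familiar anharmonic $6$-fold orbit $\lambda,\ 1-\lambda,\ 1/\lambda,\ 1/(1-\lambda),\ \lambda/(\lambda-1),\ (\lambda-1)/\lambda$, and $\Aut(q)$ is the full preimage in $\SG_4$ of the stabilizer of $\lambda$ under this action. A direct inspection shows that this stabilizer has order $2$ precisely when $\lambda \in \{-1, 1/2, 2\}$ (the six values collapse to three) and order $3$ precisely when $\lambda^2 - \lambda + 1 = 0$ (they collapse to two), while it is trivial otherwise; so $\Aut(q)$ has order $4$, $8$, or $12$ accordingly.

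To translate these cases into conditions on $I,J$, I would plug the normalized form into the formulas (\ref{eq:IJ}), obtaining $I(q_\lambda) = \lambda^2 - \lambda + 1$ and $J(q_\lambda) = (\lambda+1)(2\lambda-1)(\lambda-2)$. Hence $I=0$ picks out the equianharmonic case (order $12$) and $J=0$ the harmonic case (order $8$), and these are mutually exclusive because $\Delta = 4I^3 - J^2 \neq 0$. It remains to identify isomorphism types: the generic order-$4$ group is $V \cong \DG_4$ by construction; the order-$8$ group is the preimage of a transposition in $\SG_3$, hence a Sylow $2$-subgroup of $\SG_4$, which is $\DG_8$ (one checks this concretely via the $4$-cycle $x \mapsto (x-1)/(x+1)$ for $\lambda=-1$); and the order-$12$ group is the preimage of $\AG_3 \subset \SG_3$, namely the unique $\AG_4 \subset \SG_4$ (verified by the $3$-cycle $x \mapsto 1/(1-x)$, which fixes $\lambda$ when $\lambda^2-\lambda+1=0$). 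The delicate point is this last group-theoretic identification, where one could potentially confuse $\DG_8$ with $Q_8$ or $\AG_4$ with $\CG_{12}$; embedding everything in $\SG_4$ from the start bypasses this ambiguity entirely.
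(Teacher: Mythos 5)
Your proof is correct and follows essentially the same route as the paper's: normalize the four roots to $\{0,1,\infty,\lambda\}$ by $3$-transitivity, observe that the Klein four-group of double transpositions always stabilizes the root set, and then analyze the induced $\SG_3$-action to detect the harmonic ($\lambda\in\{-1,1/2,2\}$, $J=0$) and equianharmonic ($\lambda^2-\lambda+1=0$, $I=0$) cases. Your explicit formulas $I(q_\lambda)=\lambda^2-\lambda+1$ and $J(q_\lambda)=(\lambda+1)(2\lambda-1)(\lambda-2)$ are a welcome refinement that makes the translation into invariants fully transparent, where the paper only asserts it.
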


\begin{proof}
Let $\Lambda \subset \PP^1 (K)$ be the set of four
roots of~$q$.  Using the $3$-transitivity of the action 
of $\PGL_2 (K)$ on $\PP^1 (K)$, we may assume that 
$\Lambda = \{ 0, 1 , \infty , \lambda \}$ for some 
$\lambda \in K \setminus \{0, 1 \}$.
Then the transformation $x \mapsto \lambda / x$
induces the permutation $(0 \infty) (1 \lambda)$ of~$\Lambda$.
By symmetry, we see that $\Stab \Lambda \subset \Sym \Lambda$
contains the Viergruppe $\DG_4 \subset \Sym \Lambda$.

We are reduced to analyzing the case when 
$\Stab \Lambda$ properly contains $\DG_4$. 
Since the extension 
$1 \rightarrow \DG_4 \rightarrow \SG_4 \rightarrow \SG_3 \rightarrow 1$
is split and all subgroups of $\SG_3$ of equal order are conjugate, 
this is in turn equivalent to determining when
$\Stab \Lambda$ contains an additional given $2$- or
$3$-cycle. These cases give rise to the exceptional groups
in Equation~\eqref{binquadaut} of order $8$ and~$12$.

First let us see for which $\lambda$ the permutation
$(1 \lambda)$ is in $\Stab \Lambda$. In this case, 
the fractional linear transformation fixes $0$ and $\infty$
and is therefore of the form $x \mapsto c x$. This only 
gives a new automorphism if $c = -1$, so $\lambda = -1$ and $J = 0$.

In the case where the permutation $(0 1 \lambda)$ is in 
$\Stab \Lambda$, a slightly more involved calculation gives 
that $\lambda = \zeta_3 + 1$ for a primitive third root of unity
$\zeta_3$, and in that case $I = 0$.
\end{proof}

We will also need in the sequel the following result.

\begin{proposition}
\label{prop:iso-quartic}
Let $q$ be a binary quartic form defined over $k$ with distinct roots,
and let $\frakq$ be the form defined by Equation~\eqref{quarticsplit}.
Assume that $I(q) \ne 0$ and $J(q) \ne 0$.  Then a $K$-isomorphism
between $q$ and $\frakq = z (x^3+ b_1 x z^2 + b_0 z^3)$ is 
defined over any extension of $k$ where $q$ has a root.
\end{proposition}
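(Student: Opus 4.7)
\smallskip
\noindent \emph{Proof plan.} The plan is to build the desired isomorphism as a product of three $L$-rational matrices in $\PGL_2$, where $L$ is any extension of $k$ containing a root $\alpha$ of $q$, so that the composite witnesses the $L$-rationality claim.

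First, I would choose $M_1 \in \PGL_2(L)$ sending $(\alpha : 1)$ to $(1:0)$, which is the distinguished root of $\frakq$; the explicit matrix $M_1 = \left(\begin{smallmatrix} 0 & 1 \\ 1 & -\alpha \end{smallmatrix}\right)$ does the job. The transformed quartic $M_1 \cdot q$ is then divisible by $z$, and rescaling to make the coefficient of $x^3 z$ equal to $1$ brings it to the shape $z(x^3 + c_2 x^2 z + c_1 x z^2 + c_0 z^3)$ with $c_i \in L$.

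Next, I would apply the shear $M_2 = \left(\begin{smallmatrix} 1 & -c_2/3 \\ 0 & 1 \end{smallmatrix}\right) \in \PGL_2(L)$ to kill the $x^2 z$ coefficient; this is legitimate since $p \neq 3$, and it fixes the root $(1:0)$. The output is a form $z(x^3 + c_1' x z^2 + c_0' z^3)$ with $c_1', c_0' \in L$, and one checks from~(\ref{eq:IJ}) that both $c_1'$ and $c_0'$ are nonzero thanks to the hypotheses $I(q) \neq 0$ and $J(q) \neq 0$.

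Finally, I would complete the reduction with a diagonal matrix $M_3 = \mathrm{diag}(t, 1)$. Its action produces a scalar multiple of $z(x^3 + c_1' t^2 x z^2 + c_0' t^3 z^3)$, so matching with $\frakq$ yields the simultaneous conditions $c_1' t^2 = b_1$ and $c_0' t^3 = b_0$. Taken in isolation, each would seem to demand a nontrivial root extraction, but the key observation is that these two equations are compatible because the absolute invariant $I^3/J^2$ agrees for our form and for $\frakq$; unwinding this compatibility gives $b_0^2 c_1'^{3} = b_1^3 c_0'^{2}$, and a direct check then shows that the single candidate $t = b_0 c_1'/(b_1 c_0') \in L$ satisfies both equations simultaneously. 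The step that really deserves care is this compatibility verification; granting it, the product $M_3 M_2 M_1 \in \PGL_2(L)$ is the sought isomorphism.
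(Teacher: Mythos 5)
Your proposal is correct and follows essentially the same route as the paper: move a root of $q$ to infinity over $L$, depress the resulting cubic factor, and observe that the equality of the absolute invariant $I^3/J^2$ makes the two scaling conditions compatible, so the single candidate $t \in L$ (the paper's $\lambda$, up to inversion of direction) works. Your version merely makes the shear step and the nonvanishing of the coefficients explicit, which the paper leaves implicit.
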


\begin{proof}
Let $k'$ be an extension of $k$ where $q$ has a root. 
By a change of variable defined over $k'$, we can map this
root to infinity and hence $q$ onto  $q'=z r$,
where $r=x^3 + a_1 x z^2+ a_0 z^3 \in k'[x,z]$. 
Now since 
\begin{align*}
I(q')&= -a_1/4 & I(\frakq)&= -b_1/4 \\
J(q')&=-a_0/16 & J(\frakq)&=-b_0/16
\end{align*}
we get the relation $a_1^3/a_0^2 = b_1^3/b_0^2$. 
Hence if we define $\lambda\in k'$ by
$$\lambda = \frac{J(q') I(\frakq)}{J(\frakq) I(q')},$$
the $k'$-isomorphism $M\colon (x,z) \mapsto (\lambda x,z)$ maps 
$q'$ onto $\frakq$.
\end{proof}

\subsection{Generic forms of even degree}
\label{sec:gener-exampl-genus}

We now describe an algorithm, based on the ideas of
Sections~\ref{sec:classical-approach} and \ref{sec:other-geom-covar},
to compute the isomorphisms between two generic binary forms 
$f_1$ and~$f_2$. Our notation is as in~\ref{sec:classical-approach}.

\begin{algorithm}[\texttt{IsGL2EquivCovariant}]
\label{alg:covariant}
\begin{alglist}
\algin  Two forms $f_1$ and $f_2$ of the same degree $n \geq 3$ over $k$,
        and integer parameters $\Bord\ge3$, $\Bdeg\ge2$, and $\Bsing\ge0$.
\algout The matrices $M=(m_{i,j})_{i,j}$ in $\PGL_2(k)$ such that $M.f_1 \sim f_2$.
\item \emph{Order loop}.
      For $o$ increasing from $3$ to $\Bord$ do:
      \begin{algsublist}
      \item \emph{Degree loop}.
            For $d$ increasing from $2$ to $\Bdeg$ do:
            \begin{algsubsublist}
            \item Compute a random covariant $C$ of order $o$ and degree $d$ using
                  transvectants.
            \item If $\tilde{C}(f_1)$ is of degree at least 3, then compute
                  $\Isom(\tilde{C}(f_1), \tilde{C}(f_2))$ and return the elements which
                  induce isomorphisms between $f_1$ and $f_2$.
            \item Otherwise, repeat the following procedure $\Bsing$ times:
                  \begin{itemize}
                  \item[--] Compute a new random covariant $C'$ of order $o$ and degree $d$
                        using transvectants,
                        and replace $C$ by the covariant $C + \kappa C'$ for some random
                        $\kappa$ in the field $k$. 
                  \item[--] If $\tilde{C}(f_1)$ is of degree at least $3$,
                        compute
                        $\Isom(\tilde{C}(f_1),\tilde{C}(f_2))$ and return the elements that
                        induce isomorphisms between $f_1$ and $f_2$. 
                  \end{itemize}
            \end{algsubsublist}
      \end{algsublist}
\item \emph{Failure}. Return the result of \texttt{IsGL2EquivFast}($f_1$, $f_2$).
\end{alglist}
\end{algorithm}

For the purpose of computing random covariants, we follow
Gordan~\cite{\gordan}. Given an order $o$ and a degree $d$, we construct
recursively a covariant $C = \left( \prod C_{d',o' },\ f \right)_h$ as a
transvectant of some level $h$ of the form $f$ and a product of covariants of
intermediate orders $o'$ and degrees $d'$, under the two constraints 
$d = \sum d'$ and $o = n + \sum o' -2 h$.

When $n$ is even, the transvectant of smallest order and degree is
$C_{2,4} = (f,f)_{n-2}$. The next simplest transvectant is 
$C_{3,4} = ((f, f)_{n/2}, f)_{n-2}$, of order $4$ and degree $3$.
For large orders and degrees, covariants must be computed `on the fly',
specialized for $f_1$ and $f_2$, since expressions are far too large 
to be precomputed.

To completely specify the algorithm, we have to be more precise 
about how to compute covariants and how to choose the loop bounds
$\Bord$, $\Bdeg$ and $\Bsing$.
A straightforward choice for the loop bounds 
is $\Bord=4$, $\Bdeg=2$, and $\Bsing=0$. With this choice, 
only the covariant $C_{2, 4} = (f, f)_{n-2}$ is tested for $n$ even, 
and when it turns out that the discriminant of this covariant vanishes,
we go back to the method \texttt{IsGL2EquivFast}.  
First note that the covariant $( f, f)_{n-2}$ can be easily computed.
Using Equation~\eqref{eq:transvectant}, we find that we can write
\begin{equation}
\label{eq:ff24}
\frac { \left( n!  \right)^2}{ \left( n-2 \right) !}  ({ f}, { f})_{n-2}
  = c_4 x^4 + c_3 x^3 z + c_2 x^2 z^2 + c_1 x z^3 + c_0 z^4,
\end{equation}  
where the coefficients $c_i$ are given by
{\allowdisplaybreaks
\begin{align*}
   c_0 &= \sum _{k=0}^{n-2} (-1)^k (n-k)!\, (k+2)!\, a_{n-2-k}  a_k \\
   c_1 &= \sum _{k=0}^{n-2} (-1)^k (n-k)!\, (k+2)!\, 
            \bigl( (n-1-k) a_{n-1-k}  a_k + (k+1) a_{n-2-k} a_{k+1} \bigr) \\
   c_2 &= \frac{1}{2} \sum _{k=0}^{n-2} (-1)^k (n-k)!\, (k+2)!\, \bigl( (k+2)(k+1) a_{k+2} a_{n-2-k} \\
       & \qquad\qquad\qquad    + 2(n-1-k)(k+1) a_{k+1} a_{n-1-k} + (n-k)(n-1-k) a_k  a_{n-k} \bigr) \\
   c_3 &= \sum _{k=0}^{n-2} (-1)^k (n-k)!\, (k+2)!\, 
            \bigl( (n-1-k) a_{n-k} a_{k+1}  
               + (k+1) a_{n-1-k} a_{k+2} \bigr) \\
   c_4 &= \sum _{k=0}^{n-2} (-1)^k (n-k)!\, (k+2)!\, a_{n-k} a_{k+2} .
\end{align*}
Moreover, this setting is a good option for generic forms, as the following
proposition shows.
}

\begin{proposition}
Let $n \geq 6$ be an even integer and $p \neq 2,3$. Let $f$ be a generic
binary form of degree $n$ over $k$. Then the discriminant of $C_{2,4} (f)$
is nonzero.
\end{proposition}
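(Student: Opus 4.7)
The plan is to reduce the genericity statement to exhibiting a single witness. Since $\Delta(C_{2,4}(f))$ is a polynomial function of the coefficients $(a_0,\ldots,a_n)$ of $f$, the set of forms where this polynomial vanishes is Zariski-closed; so it suffices to produce one form $f_0$ of degree $n$ with $\Delta(C_{2,4}(f_0)) \neq 0$.

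For the witness, I would take a form with $x \leftrightarrow z$ symmetry whose support lies entirely in even-index monomials, for example $f_0 = x^n + x^{n-2} z^2 + x^2 z^{n-2} + z^n$. The symmetry forces $C_{2,4}(f_0)$ to have the biquadratic shape $A x^4 + B x^2 z^2 + A z^4$, and the restriction of the support to evens kills the coefficients $c_1$ and $c_3$ in \eqref{eq:ff24}. A biquadratic quartic has distinct roots precisely when $A\neq 0$ and $B^2\neq 4A^2$, so the problem reduces to checking these two inequalities.

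Next I would apply \eqref{eq:ff24} directly: with $a_i \neq 0$ only for $i\in\{0,2,n-2,n\}$, only index pairs $(i,j)\subset\{0,2,n-2,n\}^2$ with the prescribed sums contribute. This yields closed-form expressions of the shape $A = 4\,n!$ and $B = (n-2)!\,\bigl[2n^2(n-1)^2 + 4n(n-1) + 24(n-2)(n-3)\bigr]$ for $n\geq 8$ (with a separate, easier verification for the case $n=6$, where the extra overlap $n-4=2$ contributes a few more terms). Factoring out $(n-2)!$ from both $B-2A$ and $B+2A$, one is left with degree-$4$ polynomials in $n$ that are strictly positive for $n\geq 6$; this rules out $B=\pm 2A$ and confirms $A\neq 0$ in characteristic $0$, hence also modulo every prime $p>n$.

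The main obstacle I foresee is making the argument uniform in the characteristic $p\neq 2,3$. Formula \eqref{eq:ff24} in fact computes $(n!)^2/(n-2)!\,\cdot\,(f,f)_{n-2}$, so the particular $c_i$'s above vanish modulo any $p\le n$, and the above numerical check breaks down for small residual characteristics. The remedy is to use a properly renormalized transvectant well-defined in characteristic $p$, or equivalently to replace $f_0$ by a $p$-dependent specialization — for instance introducing free coefficients $\lambda_i$ and observing that $\Delta\circ C_{2,4}$ becomes a non-trivial polynomial in the $\lambda_i$ with at least one coefficient coprime to $p$. Since we only need the existence of \emph{some} $f_0$ in characteristic $p$, and since $\Delta\circ C_{2,4}$ is a non-trivial $\GL_2$-invariant of $S^n(V)$ of degree $12$, the non-vanishing follows from the non-triviality of the ring $\ii_n$ in each allowed characteristic.
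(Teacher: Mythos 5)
Your overall strategy---reduce the genericity claim to exhibiting a single witness, since $f \mapsto \Disc(C_{2,4}(f))$ is a polynomial in the coefficients of $f$---is exactly the one the paper uses, and your characteristic-zero computation with the symmetric witness $x^n+x^{n-2}z^2+x^2z^{n-2}+z^n$ looks workable (I checked $A=4\,n!$ for $n\ge 8$; the value of $B$ and the sign of $(B\pm 2A)/(n-2)!$ still need to be written out, but that is routine). The genuine gap is the positive-characteristic case, which you correctly identify as the main obstacle but do not resolve. Your closing claim---that non-vanishing of $\Disc\circ C_{2,4}$ in characteristic $p$ ``follows from the non-triviality of the ring $\ii_n$''---is a non sequitur: the invariant ring being non-trivial says nothing about whether this \emph{particular} degree-$12$ invariant is non-zero modulo $p$; establishing exactly that is the content of the proposition. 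Likewise, ``introducing free coefficients $\lambda_i$ and observing that $\Disc\circ C_{2,4}$ becomes a non-trivial polynomial with at least one coefficient coprime to $p$'' is precisely the assertion to be proved, not an observation. As you note yourself, every term of \eqref{eq:ff24} carries a factor $(n-k)!\,(k+2)!$ with $\max(n-k,k+2)\ge (n+2)/2$, so the unnormalized expression vanishes identically once $p\le (n+2)/2$; after renormalizing, the reductions of $A$ and $B$ modulo $p$ are not controlled by your characteristic-zero computation (they can vanish, or satisfy $B^2=4A^2$, for primes dividing the relevant integers), and a single witness cannot cover all $p\neq 2,3$.

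The paper closes this gap concretely rather than abstractly: it evaluates the normalized covariant on $x^n+x^{n-1}z-xz^{n-1}-z^n$, obtaining a discriminant with numerator $64(n-3)(n-2)(n^2+3n+6)$, then switches to $x^n+x^{n-1}z+xz^{n-1}-z^n$ (and, for $p=5$, to $x^n+x^{n-1}z+xz^{n-1}+2z^n$), checking that the numerators of the resulting discriminants are coprime, so that for every $p\neq 2,3$ at least one of the three witnesses survives reduction. To complete your argument you would need an analogous finite list of explicit witnesses, with discriminants whose numerators have no common prime factor outside $\{2,3\}$, covering in particular the primes dividing $A$, $B-2A$ and $B+2A$ for your $f_0$.
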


\begin{proof}
It is enough to find a single form $f$ of degree $n$ for which
$C_{2,4}(f)$ has nonzero discriminant. First let us suppose that
$p$ is coprime to $n (n-2) (n-3) (n^2+3 n+6)$. We then take 
$f = x^n + x^{n-1} z - x z^{n-1} - z^n$. Note that this form is in fact
nonsingular because $f = (x + z)(x^{n-1} - z^{n-1})$. We have that 
$$-C_{2,4} (f) = \frac{4}{n}  x^3 z 
                +  \frac{2(n^2 - n + 6)}{n^2} x^2 z 
                + \frac{4}{n}  x z^2.$$
This form has discriminant equal to $64 (n-3) (n-2) (n^2+3 n+6)/n^6$,
which is nonzero by hypothesis.
  
One calculates similarly that for the other values 
of $p \neq 2,3,5$, one can use the form 
$x^n + x^{n-1} z + x z^{n-1} - z^n$ instead. Indeed,
under these hypotheses on $p$ the numerator
$n^4 + 2 n^3 + 5 n^2 - 12 n + 36$  of the resulting discriminant
is coprime to the previous numerator. To finish the proof, $p=5$ 
can be excluded using the form 
$x^n + x^{n-1} z + x z^{n-1} + 2 z^n$.
\end{proof}
 
For nonrandom forms, especially forms of small degree with 
nontrivial automorphism group, it may be interesting to test
other covariants than merely $C_{4,2}$.  We then propose the 
following settings:
$$  \Bord=\min(8, n),\ \Bdeg=10,\ \text{ and }\ \Bsing=10. $$
These bounds are constant in order to keep the total time complexity
quasi-linear in~$n$. More precisely, the bound $\Bord$ is chosen
to be at most $8$ so as to take advantage of the classification work
of~\cite{\LReleven}, the bound $\Bdeg$ is chosen to cover all the
possible fundamental covariants of degree $8$ and with order 
between $4$ and~$8$
(see~\cite[Table~1, p.~607]{\LReleven}),
and the bound $\Bsing$ is chosen so as to increase the
probability that our covariants, if singular, have distinct
points of singularity (so that a linear combination may be
nonsingular).

\begin{remark}
We may enter the last loop of the algorithm even 
if the form $f$ has no geometric automorphisms.
For example, this happens with the degree-$8$ form
$$x^7 z + 7 x^6 z^2 + 7 x^5 z^3 + 8 x^4 z^4 + 2 x^3 z^5 + 10 x^2 z^6 + 9 x z^7$$
over $k = \FF_{11}$. 
\end{remark}

We have programmed Algorithm~\ref{alg:covariant}
in Magma (version 2.18-2), using the first setting
of the parameters.
In particular, we have implemented the covariant $C_{4,2}$
using Equation~\eqref{eq:ff24}, and we have measured the 
timings of the resulting procedure, \texttt{IsGL2EquivCovariant},
in the same experiments as in Section~\ref{sec:classical-approach}.
The results are presented in Table~\ref{tab:isgl2equiv}.
As expected, computing isomorphisms is much faster with the help
of covariants, even if the forms are split over~$k$.

\subsection{Application to isomorphisms of hyperelliptic curves}
\label{sec:applications}

\subsubsection{Isomorphisms of forms and of hyperelliptic curves}
\label{sec:iso-general-fac}

A curve $\Xm$ of genus $g \ge 1$ defined over $k$ will 
be called \emph{hyperelliptic} if $\Xm / K$ has a 
separable degree-$2$ map to~$\PP^1_{K}$.  If $g > 1$, 
the curve $\Xm$ then has a unique involution~$\iota$,
called the \emph{hyperelliptic involution}, such that 
$\Qm=\Xm/\langle \iota \rangle$ is of genus~$0$. 
This involution is in the center of $\Aut_{K} \Xm$. 
We call $\Autred_K \Xm =(\Aut_{K} \Xm) / \langle \iota \rangle$
the \emph{reduced automorphism group} of $\Xm$.

Let us assume from now on that $p \ne 2$.  Then if $Q$ 
has a rational point, $\Xm$ is birationally equivalent
to an affine curve of the form $y^2= f(x)$ for a separable
polynomial $f$ of degree $2g+1$ or $2g+2$.  We say that
$f$ is a \emph{hyperelliptic polynomial} and that $\Xm$
has a \emph{hyperelliptic equation} if a curve in 
its isomorphism class (over~$k$) can be written in 
the form above.  We denote by $\Xm_f$ the curve
associated to a hyperelliptic polynomial~$f$. A hyperelliptic
curve automatically has a hyperelliptic equation when $k$
is algebraically closed or a finite field. However, for
more general fields and curves of odd genus, this is not
necessarily the case (see~\cite{\LReleven}).

By homogenizing to weighted projective coordinates
of weight $(1,g+1,1)$, we obtain an equation 
$y^2 = f(x,z)$. Here $f$ is seen as a form of 
degree $2g+2$, taking into account a `root' at 
infinity when $\deg f=2g+1$.  With this convention,
the roots of $f$ are the ramification points of
the cover $\Xm/Q$. We will use these conventions 
for the roots and degree in the sequel when we 
speak about a hyperelliptic polynomial or the associated form.

If $f_1$ and $f_2$ are hyperelliptic polynomials of 
even degree $2g+2 \geq 6$, then isomorphisms between the
hyperelliptic curves $y^2 = f_i(x,z)$ are represented by pairs
$(M,e)$ with 
$$M =\begin{bmatrix}
     a & b \\ 
     c & d 
     \end{bmatrix} \in \GL_2(k)$$
and  $e \in k^*$. To such a couple, one
associates the isomorphism 
$$(x,z,y) \mapsto (ax+bz,cx+dz, ey).$$
The representation is unique up to the equivalence
$(M,e) \equiv (\lambda M,\lambda^{g+1} e)$ for 
$\lambda \in k^*$. Hence, if $M.f_1=\mu \cdot f_2$
then the map
\begin{align*}
\Isom(f_1,f_2)  &    \to  (\GL_2(k) \times K^*)/\equiv  \\
M               &\mapsto  (M,\pm \sqrt{\mu})
\end{align*}
is well-defined up to the choice of a sign. It surjects onto 
$\Isom(\Xm_{f_1},\Xm_{f_2})$, so knowing
$\Isom(f_1,f_2)$ is enough to determine 
$\Isom(\Xm_{f_1},\Xm_{f_2})$
`up to the hyperelliptic involution'.

\subsubsection{Hyperelliptic curves of genus $2$ and $3$}
\label{sec:all-hyper-genus}

The covariant approach requires a covariant with at least
three distinct roots, and hence it may fail in special cases,
which we can specify for small genera. We give some details
on the more difficult of the two cases: the genus-$3$ case.
This problem is naturally stratified by the possible
automorphism groups of the curve; we list these
automorphism groups, together with normal models and inclusion
relations between the strata, in Figures~\ref{tab:auto}
and~\ref{tab:auto2}.  We assume here that $p=0$ or $p>7$.

\begin{figure}[htbp]
\begin{center}
\begin{tabular}[t]{rlcrlcl}
\toprule
\multicolumn{2}{c}{$\Aut_K \Xm_f$} && \multicolumn{2}{c}{$\Autred_K \Xm_f$} && Normal models $\Xm_f\colon y^2=f$ \\
\midrule
& $\CG_2$              &&\hbox to 1ex{}& $\{1\}$    && $f = x (x-1) (x^5 + ax^4 + bx^3 + cx^2 + dx + e)$ \\[1ex]
& $\DG_4$              &&              & $\CG_2$    && $f = x^8 + ax^6 + bx^4 + cx^2 + 1$ or             \\
&                      &&              &            && $f = (x^2 - 1) (x^6 + ax^4 + bx^2 + c)$           \\[1ex]
& $\CG_4$              &&              & $\CG_2$    && $f = x (x^2 - 1) (x^4 + ax^2 + b)$                \\[1ex]
& $\CG_2^3$            &&              & $\DG_4$    && $f = (x^4 + ax^2 + 1) (x^4 + bx^2 + 1)$           \\[1ex]
& $\CG_2 \times \CG_4$ &&              & $\DG_4$    && $f = (x^4 - 1) (x^4 + ax^2 + 1)$ or               \\
&                      &&              &            && $f = x (x^2 - 1) (x^4 + ax^2 + 1)$                \\[1ex]
& $\DG_{12}$           &&              & $\DG_6$    && $f = x (x^6 + ax^3 + 1)$                          \\[1ex]
& $\CG_2 \times \DG_8$ &&              & $\DG_8$    && $f = x^8 + ax^4 + 1$                              \\[1ex]
& $\CG_{14}$           &&              & $\CG_7$    && $f = x^7 - 1$                                     \\[1ex]
& $\UG_6$              &&              & $\DG_{12}$ && $f = x (x^6 - 1)$                                 \\[1ex]
& $\VG_8$              &&              & $\DG_{16}$ && $f = x^8 - 1$                                     \\[1ex]
& $\CG_2 \times \SG_4$ &&              & $\SG_4$    && $f = x^8 + 14x^4 + 1$                             \\
\bottomrule
\end{tabular}
\bigskip
\caption{Automorphism groups of genus-$3$ hyperelliptic curves.
For each automorphism group, we list the associated reduced
automorphism group, together with normal model(s) for the generic
hyperelliptic curve with that automorphism group.  The notation for the
groups is given at the end of the Introduction.}
\label{tab:auto}
\end{center}
\end{figure}

\begin{figure}[htbp]
$$  \xymatrixrowsep{18pt}
%   \xymatrixcolsep{2pt}
\xymatrix{
         & \CG_2 \ar@{-}[ddddl] \ar@{-}[dd] \ar@{-}[rd]  &                                            &                                           && \text{$5$-dimensional}\\
         &                                               & \DG_4 \ar@{-}[dd] \ar@{-}[ddl] \ar@{-}[dr] &                                           && \text{$3$-dimensional}\\
         & \CG_4 \ar@{-}[d]                              &                                            & \CG_2^3 \ar@{-}[d]                        && \text{$2$-dimensional}\\
         & \CG_2 \times \CG_4 \ar@{-}[d]  \ar@{-}[dr]    & \DG_{12} \ar@{-}[dl] \ar@{-}[dr]           & \CG_2 \times \DG_8 \ar@{-}[dl] \ar@{-}[d] && \text{$1$-dimensional}\\
\CG_{14} & \UG_6                                         & \VG_8                                      & \CG_2 \times \SG_4                        && \text{$0$-dimensional}\\
}
$$
\caption{Dimensions and containment relationships among the moduli spaces of
genus-$3$ hyperelliptic curves with given automorphism groups.}
\label{tab:auto2}
\end{figure}

The moduli space of hyperelliptic curves of genus $3$
is $5$-dimensional, and can be explicitly described
using the Shioda invariants $J_2, J_3, \ldots, J_{10}$
constructed in \cite{\shioda}.  These invariants were used
to speed up the calculations leading to the proof of the 
following proposition, which shows that the locus where 
the covariant method fails is of codimension $4$ in the
full moduli space.  (The Magma parts of this proof, and
of other proofs in this section, may be found at the URL 
listed in the Introduction.)

\begin{proposition}
\label{prop:gen3covs}
Let $\Xm_f/K\colon y^2 = f(x)$ be a genus-$3$ hyperelliptic
curve such that the form $f$ cancels the discriminants of 
all its quartic covariants.  Then $\Aut \Xm_f$ contains 
either $\DG_{12}$, $\CG_2 \times \DG_8$, or $\CG_{14}$.
\end{proposition}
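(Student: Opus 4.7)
My strategy is to prove the contrapositive by a stratum-by-stratum analysis based on Figure~\ref{tab:auto}. If $\Aut(\Xm_f)$ contains none of $\DG_{12}$, $\CG_2 \times \DG_8$ or $\CG_{14}$, then the inclusion diagram forces $\Aut(\Xm_f)$ to be one of $\CG_2$, $\DG_4$, $\CG_4$, $\CG_2^3$ or $\CG_2 \times \CG_4$, and in each of these five cases I must exhibit at least one quartic covariant $C$ with $\Delta(C(f)) \neq 0$.

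The first ingredient is a sufficient finite list of quartic covariants to test. By \cite[Table 2]{LR11}, the algebra of covariants of a binary octic in characteristic $0$ or $p>7$ is finitely generated by fundamental covariants of bounded degree, so its order-$4$ graded piece is finite-dimensional. It therefore suffices to evaluate $\Delta$ on the fundamental quartic covariants, starting with $C_{2,4}=(f,f)_6$ and $C_{3,4}=((f,f)_4,f)_6$ and continuing through the remaining order-$4$ generators read off from \cite{LR11}; any other quartic covariant is a polynomial combination of these, so a zero discriminant on the entire list forces zero discriminant on all quartic covariants.

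I would then, for each of the five strata, substitute the normal model from Figure~\ref{tab:auto} (handling both normal forms separately in the two ambiguous strata $\DG_4$ and $\CG_2 \times \CG_4$), evaluate the chosen covariants, compute the discriminant of each resulting binary quartic, and verify that at least one such discriminant is non-zero as a polynomial in the moduli parameters $a,b,\dots$. To make the calculation coordinate-free, I would re-express these discriminants as polynomials in the Shioda invariants $J_2,\dots,J_{10}$ of \cite{shioda67} and check non-vanishing on the stratum-defining ideal, which is itself described in \cite{shioda67,LR11}. The main obstacle is the sheer size of the symbolic computation, most severely on the $\CG_2$ stratum whose normal model $x(x-1)(x^5+ax^4+bx^3+cx^2+dx+e)$ has five free parameters; this is what makes the Shioda-invariant reformulation indispensable, and motivates carrying out the final verification in \textsc{magma} as the authors indicate in the footnote preceding the statement.
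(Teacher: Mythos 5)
Your reduction to the five strata $\CG_2$, $\DG_4$, $\CG_4$, $\CG_2^3$, $\CG_2 \times \CG_4$ via the inclusion diagram is correct, and the contrapositive framing is legitimate. But the verification step you propose does not prove the proposition. Checking that some discriminant ``is non-zero as a polynomial in the moduli parameters'' (equivalently, does not vanish identically on the stratum) only establishes the statement for the \emph{generic} curve of each stratum, whereas the proposition is a statement about \emph{every} curve. On the five-parameter $\CG_2$ family, the common zero locus of your finitely many discriminant polynomials is a non-empty subvariety of codimension at least one; since the locus of curves with automorphism group strictly larger than $\CG_2$ has dimension at most $3$, that common zero locus will a priori contain curves with automorphism group exactly $\CG_2$ unless you prove otherwise. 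What is actually required is to compute the full common vanishing locus of the chosen discriminants and show that each of its irreducible components lies either in the locus $\Disc(f)=0$ or in one of the three excluded strata. That is precisely the content of the paper's proof: the authors form the ideal generated by the discriminants (rewritten in Shioda invariants, together with the five Shioda relations), compute a Gr{\"o}bner basis, and identify every irreducible component of the resulting subscheme with a known stratum. A genericity check cannot substitute for this component-by-component analysis; note also that since $\CG_2$ is the dense stratum, your ``hardest case'' is essentially the same computation as the paper's global one, so the contrapositive decomposition buys little.

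A second, related flaw is your justification for restricting to fundamental quartic covariants: the claim that ``a zero discriminant on the entire list forces zero discriminant on all quartic covariants'' is false. Order-$4$ covariants also arise as invariant multiples $I \cdot C$, as products of lower-order covariants, and as linear combinations of all of these, and the discriminant is not additive: $\Delta(C(f))=\Delta(C'(f))=0$ does not imply $\Delta\bigl(C(f)+\kappa\, I(f)\, C'(f)\bigr)=0$. This is exactly why the paper tests the $1253$ combinations $C(f)\pm \kappa\, I(f)\, C'(f)$ rather than the $14$ fundamental quartic covariants alone. For your contrapositive you only need a single witness covariant at each point, so the false implication is not itself load-bearing; but you are then left with no argument that the fundamental quartics suffice pointwise on all five strata, and the paper's recourse to combinations suggests they may not.
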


\begin{proof}
Construct $C(f)\pm \kappa \cdot I(f) \cdot C'(f)$
such that $\deg(C) = \deg(I) + \deg(C')$, where $C$ and $C'$ 
run through the $14$ fundamental quartic covariants given
in~\cite[Table~1]{\LReleven}, where $I(f)$ equals either $1$
or a Shioda invariant $J_i(f)$, and where $\kappa$ runs through
the integers between $0$ and $10$.  We rewrite the discriminants
of these covariants in terms of Shioda invariants and add to them
the five Shioda relations~\cite[Theorem~3, p.~1042]{\shioda}.
Using Magma, we have been able to compute a Gr{\"o}bner basis
of this polynomial system, over $\QQ$, for the graded reverse 
lexicographical (or `grevlex') order $J_2 < J_3 < \ldots < J_{10}$
with weights $2$, $3$, \ldots, $10$. Upon removing
multiplicities, we obtain a basis with $22$ polynomials, 
of total degree between $8$ and $20$. One then checks, using
the stratum formulas from~\cite{\LReleven}, that the irreducible
components of the corresponding subscheme of the moduli space
either correspond to families of forms with discriminant zero 
or to strata of curves $\Xm_f$ such that $\Aut \Xm_f$ contain 
$\DG_{12}$, $\CG_2 \times \DG_8$, or $\CG_{14}$.
\end{proof}

We see from this that curves with automorphism group $\DG_{12}$,
$\CG_2 \times \DG_8$, or $\CG_{14}$ cannot have separable quartic 
covariants. In these cases, using Proposition~\ref{prop:ftoc}
and the normal models from Figure~\ref{tab:auto}, one can show:
\begin{itemize}
\item If $\Aut X$ is equal to $\DG_{12}$ or $\UG_6$ 
      then the sextic covariant $C_{3,6}=( ( f, f)_4, f)_5$ 
      has nonzero discriminant;
\item If $\Aut X$ contains $\CG_2 \times \DG_8$ or is
      equal to $\CG_{14}$ then there is no order-$4$ or
      order-$6$ covariant with three distinct roots.
\end{itemize}

The number of covariants considered in the proof of
Proposition~\ref{prop:gen3covs} --- namely, $1253$ --- is
not minimal, but the redundancy helped Magma during the
Gr{\"o}bner basis computations.  Nevertheless, similar 
computations show that we can easily reduce this number
for curves with automorphism group larger than $\CG_2$ 
(and moreover impose conditions on the automorphism groups
of the covariants; see Sections \ref{sec:covariant-approach} 
and \ref{sec:d4-case}).  For example, consider the following 
five quartic covariants:
\begin{align*}
C_{2,4}  &=       ( f, f)_6                     & C_{4,4}  &=   ( ( ( f, f)_4, f)_6, f)_4  \\
C_{3,4}  &=     ( ( f, f)_4, f)_6               & C_{4,4}' &=   ( ( ( f, f)_4, f)_4, f)_6  \\
         &                                      & C_{5,4}  &= ( ( ( ( f, f)_4, f)_6, f)_1, f)_7.  
\end{align*}
If $\Xm_f/K$ is a genus-$3$ hyperelliptic curve, we find that:
\begin{itemize}
\item If $\Aut \Xm_f \cong \DG_4$, one of the five covariants above
      has nonzero discriminant.
\item If $\Aut \Xm_f \cong \CG_4$, one of 
      $C_{2,4}$, $C_{3,4}$, $C_{4,4}$, and $C_{4,4}'$
      has nonzero discriminant.
\item If $\Aut \Xm_f \cong \CG_2^3$, one of 
      $C_{2,4}$, $C_{3,4}$, and $C_{4,4}$
      has nonzero discriminant.
\item If $\Aut \Xm_f \cong \CG_2 \times \CG_4$, 
      the covariant $C_{3,4}$
      has nonzero discriminant.
\end{itemize}

\begin{remark}
Similar conclusions hold for genus~$2$. Specifically, 
there is no quartic covariant with nonzero discriminant 
for the curves $\Xm_f/K$ such that 
$\DG_{12} \subset \Aut \Xm_f$ or $\Aut \Xm_f \simeq \CG_{10}$. 
Moreover, when $\Aut \Xm_f \simeq \DG_8$ then $(f,f)_4$ has 
nonzero discriminant, and when $\Aut \Xm_f \simeq \DG_4$ 
then at least one of $(f,f)_4$, $(((f,f)_2,f)_4,f)_4$, and
$((((f,f)_2,f)_3,f)_2,f)_6$ has nonzero discriminant.
\end{remark}

\section{Explicit descent for hyperelliptic curves}
\label{sec:desc-algebr-curv}

\subsection{Field of moduli and fields of definition}

Let $\Xm$ be a curve defined over $K$ of genus $g \geq 1$,
let $k$ be a subfield of $K$, and let $F$ be the prime
field of $K$.

\begin{definition}
\label{def:moduli}
The \emph{field of moduli} of $\Xm$, denoted $\MG_{\Xm}$,
is the subfield of $K$ fixed by 
$\{ \sigma \in \Aut K \mid  \Xm \simeq \Xm^{\sigma} \}.$
\end{definition}

We now restrict to hyperelliptic curves and we assume
that $p \ne 2$. Let $\Xm=\Xm_f$ be a hyperelliptic curve 
over $K$ given by a hyperelliptic polynomial $f$ of even degree~$n$. 
Our first task is to show that we can get
information on $\MG_{\Xm}$ through the invariants.

\begin{lemma}
Let $I_1,I_2$ be two invariants of the same degree 
for binary forms of degree~$n$. Assume that $I_1,I_2$
are defined over $F$ and that $I_2(f) \ne 0$. 
Then $\iota=I_1(f)/I_2(f)$ is an element of $\MG_{\Xm_f}$.
\end{lemma}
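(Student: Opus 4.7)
The plan is to verify directly that $\iota$ is fixed by every $\sigma \in \Aut(K)$ such that $\Xm_f \simeq {^\sigma \Xm_f}$, which by definition suffices to place $\iota$ in $\bfM_{\Xm_f}$.

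First, I would identify the Galois conjugate: writing $f = \sum a_i x^i z^{n-i}$, the twisted curve ${^\sigma \Xm_f}$ is defined by $y^2 = f^\sigma(x,z)$, where $f^\sigma = \sum \sigma(a_i) x^i z^{n-i}$. So ${^\sigma \Xm_f} \simeq \Xm_{f^\sigma}$. By the discussion at the end of Section~\ref{sec:iso-general-fac}, an isomorphism $\Xm_f \simeq \Xm_{f^\sigma}$ gives a matrix $M \in \GL_2(K)$ and a scalar $\mu \in K^*$ with
\[
M.f = \mu \cdot f^\sigma .
\]

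Next I apply $I_1$ and $I_2$ to both sides. Since $I_1, I_2$ are invariants (order $r=0$) of the same degree $d$ on binary forms of degree $n$, both have the same weight $\omega = nd/2$. Using Definition~\ref{def:covariants} on the left, and homogeneity of degree $d$ together with $F$-rationality of $I_j$ on the right, I get for $j=1,2$:
\[
\det(M)^{-\omega}\, I_j(f) \;=\; I_j(M.f) \;=\; I_j(\mu f^\sigma) \;=\; \mu^d\, I_j(f^\sigma) \;=\; \mu^d\, \sigma\!\left(I_j(f)\right),
\]
where the last equality uses that $I_j$ is a polynomial in the coefficients with coefficients in the prime field $F$, which is pointwise fixed by $\sigma$.

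Finally, since $I_2(f) \neq 0$ and both identities share the same factor $\det(M)^{-\omega}/\mu^d$, taking the ratio of the $j=1$ and $j=2$ identities eliminates the extraneous scalars and yields
\[
\frac{I_1(f)}{I_2(f)} \;=\; \frac{\sigma(I_1(f))}{\sigma(I_2(f))} \;=\; \sigma\!\left(\frac{I_1(f)}{I_2(f)}\right),
\]
i.e.\ $\sigma(\iota) = \iota$. As $\sigma$ was an arbitrary element of $\{\sigma : \Xm_f \simeq {^\sigma \Xm_f}\}$, this shows $\iota \in \bfM_{\Xm_f}$. The only subtle point is ensuring that $I_1$ and $I_2$ share the same weight, which is automatic from having equal degree and acting on forms of the same degree $n$; everything else is bookkeeping.
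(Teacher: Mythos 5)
Your proposal is correct and follows essentially the same route as the paper: take $\sigma$ fixing the field of moduli, use the induced $M \in \Isom(f, f^{\sigma})$ to write $f^{\sigma} \sim M.f$, and observe that the scalar and $\det(M)^{-\omega}$ factors cancel in the quotient $I_1/I_2$ because the two invariants have equal degree (hence equal weight). The paper's proof is just a terser version of the same computation, leaving the weight bookkeeping implicit.
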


\begin{proof}
It is enough to prove that $\iota^{\sigma}=\iota$
for all $\sigma \in \Gal(K/\MG_{\Xm})$. By the definition 
of $\MG_{\Xm}$, there exists an isomorphism between
$\Xm$ and $ \Xm^{\sigma}$.  We have seen that such an
isomorphism induces an element $M \in \Isom(f,f^{\sigma})$. 
Therefore
\begin{equation*}
\iota^{\sigma}
   =\frac{I_1(f^{\sigma})}       {I_2 (f^{\sigma})}
   =\frac{I_1(\lambda \cdot M.f)}{I_2 (\lambda \cdot M.f)}
   =\iota . \qedhere
\end{equation*}
\end{proof}

It is not always practical to work with a fixed quotient of
invariants as above, since $I_2(f)$ may be zero. As shown
in~\cite{\LReleven}, it is better to work inside a weighted 
projective space, for elements of which one can define a canonical
representative as follows. Let $(I_1 : \ldots : I_m)$ be an 
$m$-tuple of degree-$d_i$ invariants of degree-$n$ binary forms,
where $m\ge 2$, and suppose each $I_i$ is defined over~$F$.
Let $f$ be a binary form of degree~$n$.  Let $d$ be the $\gcd$
of the degrees $d_i$ of the invariants $I_i$ whose values at $f$
are nonzero. Then there exist $c_i \in \ZZ$,
with $c_i=0$ if $I_i(f)=0$,
such that $\sum c_i d_i=d$. We then define $I=\prod_i I_i^{c_i}$.
The \emph{canonical representative} of $(I_1(f) : \ldots : I_m(f))$ is
$$(\frakI_1(f), \ldots, \frakI_m(f))
   =\left(\frac{I_1(f)}{I(f)^{d_1/d}}, \ldots, \frac{I_m(f)}{I(f)^{d_m/d}}\right)
     \in \MG_{\Xm}^m.$$
     
\begin{proposition} 
\label{prop:modvsinvariant}
Let $(I_1 : \ldots : I_m)$ be a set of generators
for $\ii_n$ defined over~$F$. Then
$$\MG_{\Xm}=F(\frakI_1(f),\ldots,\frakI_m(f)).$$
\end{proposition}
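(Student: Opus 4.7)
The plan is to prove both inclusions separately. For the inclusion $F(\frakI_1(f), \ldots, \frakI_m(f)) \subseteq \bfM_{\Xm}$, I would simply apply the previous Lemma to the pair of invariants $(I_i,\,I^{d_i/d})$, where $I = \prod_j I_j^{c_j}$. Both are defined over $F$ and have the same degree $d_i$ (recall that $d \mid d_i$ by definition of $d$ as a gcd). The quotient is exactly $\frakI_i(f)$, so it lies in $\bfM_{\Xm}$.

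For the reverse inclusion, I would show that every $\sigma \in \Aut(K)$ that fixes each $\frakI_i(f)$ also satisfies $\Xm \simeq {^\sigma \Xm}$. Since the polynomials defining the $I_i$ (and hence the rational functions $\frakI_i$) have coefficients in the prime field $F$, such a $\sigma$ gives $\frakI_i(f^\sigma) = \frakI_i(f)^\sigma = \frakI_i(f)$. A short manipulation, setting $\mu = I(f^\sigma)/I(f)$ and picking $\lambda \in K^*$ with $\lambda^d = \mu$, then shows that $I_i(f^\sigma) = \lambda^{d_i}\, I_i(f)$ for every $i$. In other words, $f$ and $f^\sigma$ define the same point in the weighted projective space $\PP(d_1,\ldots,d_m)$.

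Now comes the key step: since the $I_i$ generate $\ii_n$, the classical result of invariant theory (GIT stability for binary forms with at least three distinct roots — automatic here since $\Xm$ is smooth of genus $\geq 1$, so $f$ has $2g+2 \geq 4$ distinct roots) implies that the weighted projective invariants separate $\GL_2(K)$-orbits. The action is $I_i(M.f) = \det(M)^{-nd_i/2}\,I_i(f)$, which over the algebraically closed field $K$ realizes every scaling $(I_i) \mapsto (\lambda^{d_i} I_i)$. Hence there exists $M \in \GL_2(K)$ with $M.f \sim f^\sigma$, i.e.\ $\Isom(f,f^\sigma) \ne \emptyset$. Via the correspondence recalled in Section \ref{sec:iso-general-fac}, this produces an isomorphism $\Xm_f \to \Xm_{f^\sigma} = {^\sigma \Xm}$ over $K$. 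Galois theory, together with the caveat on possible purely inseparable extensions mentioned in the introduction, then yields $\bfM_{\Xm} \subseteq F(\frakI_1(f),\ldots,\frakI_m(f))$.

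The main obstacle is the orbit-separation statement, i.e.\ that the weighted projective invariants fully classify $\GL_2(K)$-orbits of stable forms. Everything else is either a routine calculation (verifying the weighted projective scaling), a direct invocation of the previous Lemma, or formal Galois theory; the genuine content sits in invariant theory and the generation hypothesis on the $I_i$.
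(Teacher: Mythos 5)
Your proposal is correct and follows essentially the same route as the paper: the forward inclusion is the intended application of the preceding lemma to the pairs $(I_i, I^{d_i/d})$, and the reverse inclusion rests, exactly as in the paper, on the fact that $\ii_n$ separates the orbits of separable forms (the paper cites Mumford--Fogarty for this), which yields $M \in \GL_2(K)$ with $M.f \sim f^{\sigma}$ and hence $\Xm_f \simeq {}^{\sigma}\Xm_f$. Your version merely spells out the weighted-projective rescaling step that the paper compresses into the equality of canonical representatives.
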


\begin{proof}
Let $\sigma \in \Gal(K/F(\frakI_1(f),\ldots,\frakI_m(f)))$. 
Since
$$(\frakI_1(f^{\sigma}), \ldots, \frakI_m(f^{\sigma}))
   = (\frakI_1(f), \ldots, \frakI_m(f)),$$
and since $\ii_n$ separates the orbits of
separable forms \cite[p.~78]{\mumfordfogarty},
there exists a matrix $M \in \GL_2(K)$ such that 
$M.f \sim f^{\sigma}$, 
hence an isomorphism between $\Xm_f$ and $\Xm_f^{\sigma }$.
\end{proof}

With our current knowledge of invariants, we are then able to
compute $\MG_{\Xm_f}$ for $n=6,8,10$. However, in the following
applications to descent we will see that we often do not need a
complete set of invariants.

\begin{definition}
We say that $k$ is a \emph{field of definition} of $\Xm$ 
if there exists a curve $\xx/k$ such that $\xx$ is $K$-isomorphic 
to~$\Xm$. The curve $\xx/k$ is a model of $\Xm$ over $k$ and we 
call a geometric isomorphism between the
two curves a \emph{descent isomorphism}.
\end{definition}

A classical problem is to determine the smallest field of definition
of a curve. Assuming for simplicity that every subfield of $K$
is perfect, if $\MG_{\Xm}$ is a field of definition then it is the
smallest possible field of definition, because 
it is the intersection of all the fields of definition
(see \cite{\koizumi} or \cite[Theorem~1.5.8]{\hugginsphd}).
There might be an obstruction for $\MG_{\Xm}$ being a field of 
definition, but if there is none we will denote by $\xx$ a model of
$\Xm$ over $\MG_{\Xm}$. In the case of hyperelliptic curves of odd 
genus, there is a subtlety:  The curve $\xx$ does not necessarily admit
a hyperelliptic equation.  However, if it does, we will say that $\Xm$
can be \emph{hyperelliptically defined over $\MG_{\Xm}$}, and we 
denote by $\frakf \in \MG_{\Xm}[x]$ a hyperelliptic polynomial 
associated to this model.

One can find in the literature several sufficient conditions
for a curve to be hyperelliptically defined over $\MG_{\Xm}$.
For instance, it is always the case when $K$ is the algebraic
closure of a finite field  (see~\cite[Corollary~2.11]{\huggins}). 
Over an arbitrary algebraically closed field~$K$, the work
of Huggins~\cite{\huggins} shows that if the reduced 
automorphism group is noncyclic then the curve can be 
hyperelliptically defined over its field of moduli. 
For $g=2$, it has been proved that if the reduced 
automorphism group is nontrivial, then the curve can be
hyperelliptically defined over its field of moduli~\cite{\CAQU}.
This is also the case for $g=3$, except for curves with 
automorphism group isomorphic to $\DG_4$ (see~\cite{\LReleven}
and Section~\ref{sec:d4-case}).

\subsection{Explicit hyperelliptic descent}
Now let $\Xm_f$ be a hyperelliptic curve over $K$ that can be
hyperelliptically defined over $\MG_{\Xm}$. We want to find
$\frakf \in \MG_{\Xm}[x]$ and $A \in \GL_2(K)$ such that $\frakf \sim A.f$.
The first task is of course to compute $\MG_{\Xm}$. As we have seen, this 
can be done if we have a set of generators for the invariants of the form~$f$.
However, if we do not have a full set of generators, and instead 
have only some invariants $(I_1,\ldots,I_m)$ over $F$ with $m \geq 2$,
we can always try to hyperelliptically descend $\Xm_f$ over the 
field $k$ generated by $(\frakI_1(f),\ldots,\frakI_m(f))$.
Since $k \subset \MG_{\Xm}$, if this can be achieved, we are done.

\subsubsection{The cocycle approach}
\label{sec:classical-approach-2}

The direct approach relies on the following slightly modified 
version of Weil's cocycle relations (see~\cite{\LReleven}).

\begin{lemma}
The curve $\Xm_f$ can be hyperelliptically defined over $k$
if and only if there exists a finite extension $k'/k$ such
that for all $\sigma \in \Gal(K/k)$, there exists
$M_{\sigma} \in \GL_2(k')$ 
such that $M_\sigma \in \Isom_{k'}(f,f^{\sigma})$ 
and such that for all $\sigma, \tau \in \Gal(K/k)$,
we have $M_{\sigma \tau}= M_\sigma^\tau M_\tau.$
\end{lemma}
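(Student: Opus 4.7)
The plan is to reduce the statement to Weil's classical descent theorem for varieties, while carefully tracking the extra data carried by a hyperelliptic equation, namely the pair $(M, e)$ with the equivalence $(M,e) \equiv (\lambda M, \lambda^{g+1} e)$ described just before Section~\ref{sec:all-hyper-genus}.

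For the forward direction, suppose $\Xm_f$ is hyperelliptically defined over $k$ by $y^2 = \frakf(x)$ with $\frakf \in k[x]$, and let $(A,e)$ represent a descent isomorphism $\xx \to \Xm_f$, so that $A.\frakf = e^2 f$ up to the standard equivalence. Let $k'$ be a finite extension containing the entries of $A$ and $e$. For each $\sigma \in \Gal(K/k)$, define $M_\sigma = A^\sigma A^{-1}$. Applying $\sigma$ to $A.\frakf = e^2 f$ (and using that $\frakf$ has coefficients in $k$) gives $A^\sigma.\frakf = (e^\sigma)^2 f^\sigma$, from which $M_\sigma.f \sim f^\sigma$, so $M_\sigma \in \Isom_{k'}(f, f^\sigma)$. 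The cocycle identity follows by a direct manipulation,
\[
M_{\tau\sigma} \;=\; A^{\tau\sigma} A^{-1} \;=\; (A^\tau A^{-1})^\sigma \cdot (A^\sigma A^{-1}) \;=\; M_\tau^\sigma M_\sigma,
\]
which is exactly the required relation in $\GL_2(k')$.

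For the converse, I would start from the cocycle $\{M_\sigma\}$ and construct Weil descent isomorphisms. Pick $\lambda_\sigma \in K^*$ with $M_\sigma.f = \lambda_\sigma f^\sigma$; after enlarging $k'$ to contain the necessary square roots, set $e_\sigma$ to be a square root of $\lambda_\sigma^{-1}$, so that the pair $(M_\sigma, e_\sigma)$ defines an isomorphism $\phi_\sigma \colon \Xm_f \to {}^\sigma\Xm_f$. The given cocycle on the $M_\sigma$ forces $\phi_{\tau\sigma}$ and ${}^\sigma \phi_\tau \circ \phi_\sigma$ to agree on the $x$-coordinate; they therefore agree up to the hyperelliptic involution, i.e.\ up to a sign $\varepsilon_{\tau,\sigma} \in \{\pm 1\}$ in the $y$-coordinate. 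This produces a class in $H^2(\Gal(K/k),\{\pm 1\})$, and the goal is to show one can rechoose the signs of the $e_\sigma$ to kill it. Once a genuine Weil cocycle $\{\phi_\sigma\}$ has been assembled, Weil's descent theorem yields a model $\xx/k$ of $\Xm_f$; because every $\phi_\sigma$ commutes with the hyperelliptic involution $\iota$ (which is central in $\Aut_K(\Xm_f)$), the quotient $\xx/\langle \iota \rangle$ descends as a $k$-form of $\mathbb{P}^1$, and the hypothesis (available throughout this section) that $\Xm_f$ can be hyperelliptically defined over $\bfM_{\Xm}$, hence over $k$, gives a rational point on this quotient. This yields a hyperelliptic equation $y^2 = \frakf(x)$ for $\xx$ over $k$.

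The main obstacle is the last point: controlling the sign ambiguity in the $e_\sigma$ and then passing from a bare variety descent to a hyperelliptic descent. Concretely, one must verify that rechoosing $e_\sigma \mapsto \pm e_\sigma$ trivializes the $2$-cocycle $\varepsilon_{\tau,\sigma}$. The cleanest way is to exploit that the quotient $\Aut(\Xm_f)/\langle \iota\rangle = \overline{\Aut}(\Xm_f)$ acts faithfully on $\mathbb{P}^1$, so that $\varepsilon_{\tau,\sigma}$ is measured intrinsically by the failure of the given $\GL_2$-cocycle to lift to a cocycle in the central extension of $\overline{\Aut}$ by $\langle \iota\rangle$; the stated hypothesis $M_{\tau\sigma} = M_\tau^\sigma M_\sigma$ in $\GL_2(k')$ (rather than only in $\PGL_2$) provides precisely the rigidification needed to make this lift exist.
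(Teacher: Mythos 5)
The paper does not actually prove this lemma: it is presented as a ``slightly modified version of Weil's cocycle relations'' with the proof deferred to \cite{LR11}, and only the forward direction is sketched in the text that follows (via $M_{\sigma}=(A^{-1})^{\sigma}A$, the mirror image of your $A^{\sigma}A^{-1}$; both conventions work). Your forward direction is correct and agrees with that sketch.

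Your converse, however, has two genuine gaps. First, the trivialization of the sign $2$-cocycle $\varepsilon_{\tau,\sigma}$ is only asserted; the closing paragraph about a ``rigidification'' coming from the central extension is not an argument, and classes in $H^2(\Gal(K/k),\{\pm 1\})$ are not trivial in general. The obstruction does vanish here, but for a concrete reason you never exploit: writing $M_{\sigma}.f=\lambda_{\sigma}f^{\sigma}$, the relation $M_{\tau\sigma}=M_{\tau}^{\sigma}M_{\sigma}$ forces $\lambda_{\tau\sigma}=\lambda_{\tau}^{\sigma}\lambda_{\sigma}$, so by Hilbert 90 for $K^{*}$ one has $\lambda_{\sigma}=c^{\sigma}/c$, and taking $e_{\sigma}=d/d^{\sigma}$ with $d^{2}=c$ gives a strict cocycle $(M_{\sigma},e_{\sigma})$ with no sign ambiguity left. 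Second, and more seriously, your last step is circular: to extract a hyperelliptic equation for the Weil-descended model $\xx$ you invoke ``the hypothesis that $\Xm_f$ can be hyperelliptically defined over $\bfM_{\Xm}$, hence over $k$.'' That is essentially the conclusion of the lemma, not a hypothesis; moreover hyperelliptic definability over $\bfM_{\Xm}$ does not pass to the smaller field $k$ generated by invariants --- that passage is exactly what is at stake in this section. The correct reason the conic $\xx/\langle\iota\rangle$ has a $k$-point is again the $\GL_2$-structure of your data: the $\PGL_2$-valued cocycle lifts to $\GL_2$ by hypothesis, so the twisted form of $\PP^1$ is trivial by Hilbert 90 for $\GL_2$. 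In fact the cleanest proof avoids Weil descent of the abstract curve altogether: apply Hilbert 90 for $\GL_2$ to the cocycle $M_{\sigma}$ to get $A$ with $M_{\sigma}=(A^{-1})^{\sigma}A$, set $h=A.f$, check $h^{\sigma}\sim h$, and rescale $h$ into $k[x,z]$ by Hilbert 90 for $K^{*}$; this produces the hyperelliptic polynomial $\frakf$ over $k$ directly, which is the whole point of the ``hyperelliptically defined'' refinement.
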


Assume that $\Xm_f$ can be hyperelliptically defined
over $k$ and let $\phi\colon \Xm_f \to \Xm_{\frakf}$
be a descent isomorphism.  It induces a matrix 
$\tilde{A} \in \Isom_K(f,\frakf) \subset \PGL_2(K)$.
If we choose a representative $A \in GL_2(K)$ of $\tilde{A}$,
we can define $M_\sigma = (A^{-1})^{\sigma} A$ for all
$\sigma \in \Gal(K/k)$.  It is easy to check that this
choice of $M_\sigma$ satisfies all the hypotheses of the
lemma. Moreover, if $A$ is defined over a Galois extension
$L/k$ then $k' \subset L$, and we have $M_{\sigma}=\Id$
for all $\sigma \in \Gal(K/k)$ such that $\sigma_{|L}=\Id$. 
Conversely, the crucial step to construct such an $A$ is 
to identify a Galois extension $L/k$ satisfying this 
property, since in this case one can use an explicit
version of Hilbert 90 as in~\cite[Proposition~3, p.~159]{\serre}:
For a general matrix $P \in \GL_2(k')$ the matrix
\begin{equation}
\label{hilbert90}
A=\sum_{\tau \in \Gal(L/k)} P^{\tau}  M_{\tau}
\end{equation}
gives a descent morphism.

\begin{lemma}
Assume that $f$ is defined over an extension $k'$ of $k$. If
$\Aut_K f =\{\Id\}$ then we can take $L$ to be the Galois closure of~$k'/k$.
\end{lemma}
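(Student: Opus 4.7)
The plan is to show that, under the hypothesis $\Aut_K(f)=\{\Id\}$, any descent matrix $\tilde{A} \in \PGL_2(K)$ is already defined over $k'$ itself; since $k' \subset L$, where $L$ denotes the Galois closure of $k'/k$, the conclusion of the lemma will follow immediately. By hypothesis a descent exists, so we have $h \in k[x,z]$, $\mu \in K^*$, and a lift $A \in \GL_2(K)$ of $\tilde{A}$ with $A.f = \mu\,h$.

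The first step is to exploit the triviality of the geometric automorphism group to pin down $\tilde{A}$. Since $\Isom_K(f,h) \subset \PGL_2(K)$ is a torsor over $\Aut_K(f) = \{\Id\}$, it is a singleton, namely $\{\tilde{A}\}$. Then, for any $\sigma \in \Gal(K/k')$, applying $\sigma$ to the relation $A.f = \mu h$ yields $A^\sigma.f^\sigma = \mu^\sigma h^\sigma$; because $f \in k'[x,z]$ and $h \in k[x,z]$, this simplifies to $A^\sigma.f \sim h$, so $\tilde{A}^\sigma \in \Isom_K(f,h) = \{\tilde{A}\}$. Hence $\tilde{A}$ is $\Gal(K/k')$-invariant, so it belongs to $\PGL_2(k')$.

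The second step is to lift back to $\GL_2(k')$. The short exact sequence $1 \to \mathbb{G}_m \to \GL_2 \to \PGL_2 \to 1$, combined with Hilbert 90 in the classical form $H^1(k',\mathbb{G}_m)=0$, makes the map $\GL_2(k') \to \PGL_2(k')$ surjective; hence we can pick a representative $A \in \GL_2(k')$. Since $k' \subset L$ this $A$ lies in $\GL_2(L)$, and for any $\sigma \in \Gal(K/k)$ with $\sigma_{|L}=\Id$ we have a fortiori $\sigma_{|k'}=\Id$, so $A^\sigma = A$ and $M_\sigma = (A^{-1})^\sigma A = \Id$, which is exactly the property required of $L$ in the preceding discussion on Hilbert 90 applied to~\eqref{hilbert90}.

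The only real delicate point will be the uniqueness step: one must argue carefully at the level of $\PGL_2$ rather than $\GL_2$, since the scalar $\mu$ and its Galois conjugates $\mu^\sigma$ are not a priori equal. This is precisely where the assumption $\Aut_K(f) = \{\Id\}$ is used; without it, the argument would only conclude that $\tilde{A}^\sigma \tilde{A}^{-1} \in \Aut_K(f)$, and $L$ would need to be enlarged to account for the resulting automorphism cocycle.
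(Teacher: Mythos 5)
Your proof is correct and follows essentially the same route as the paper's: the triviality of $\Aut_K(f)$ makes $\Isom_K(f,h)$ a singleton, so the Galois conjugates of a lift of $\tilde{A}$ differ from it only by scalars, and Hilbert 90 for $\mathbb{G}_m$ --- which you invoke via the surjectivity of $\GL_2(k')\to\PGL_2(k')$ and the paper via an explicit scalar cocycle $\lambda_\sigma$ trivialized as $e/e^{\sigma}$ --- removes these scalars. You even land on the marginally sharper conclusion that $A$ may be taken in $\GL_2(k')$ itself, from which the statement over the Galois closure $L$ follows since $k'\subset L$.
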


\begin{proof}
We have to prove that $A$ can be defined over such an~$L$.
Let $A'$ be induced by a descent morphism. Since 
$A' \in \Isom_{K}(f,\frakf)$, we have 
$((A')^{-1})^{\sigma} A' \in \Isom_K(f,f^{\sigma})=\Aut_K f$ 
for all $\sigma \in \Gal(K/L)$;
hence there exists $\lambda_{\sigma} \in K^*$ such that 
$(A')^{\sigma}= \lambda_{\sigma} \cdot A'$. One can easily
check that the $\lambda_{\sigma}$ satisfy a cocycle relation,
so there exists $e \in K^*$ such that $\lambda_{\sigma}=e/e^{\sigma}$
for all~$\sigma$. We then define $A=e \cdot A'$, and we are done.
\end{proof}

As far as we know, there is no easy way to determine such
an $L$ when the automorphism group is nontrivial 
(but see \cite{\LReleven} for the case when $k$ is a finite field).
Na\"{\i}vely, one would expect to be able to construct the 
cocycle over the field $L_0$ over which all isomorphisms between
$f$ and its conjugates are defined. Typically, what then happens
is the following:
Let $\sigma \in \Gal (L_0 / k)$ be an element of order~$n$. 
Then usually no $M_{\sigma}$ exists over $L_0$ such that
the cocycle condition 
$1 = M_{\sigma^n} = M^{\sigma^{n-1}} \cdots M^{\sigma} \cdot M$
is satisfied. We have to work with matrices of the form 
$\lambda M_{\sigma}$, where $\lambda$ belongs to a quadratic 
extension $L$ of $L_0$. This enlarges the field and the 
Galois group, which may in turn give rise to more problems
of the same type.  Even if this problem can be resolved, 
the computation of Equation~\eqref{hilbert90} is time-consuming
and limited to extensions of small degree (less than $50$)
in practice.  In the next section, we present a new idea that
works extremely well to get around these difficulties in 
certain cases.

\begin{remark}
In the odd genus case, it turns out that if we only want 
$\Xm_f$ to have a model over~$k$, instead of a
hyperelliptic model, then the cocycle condition is replaced
by the condition $M_{\sigma \tau} \sim M_{\sigma}^{\tau} M_{\tau}$.
However, even in this case we do not know a general method to address the 
problem effectively.
\end{remark}

\subsubsection{The covariant approach}
\label{sec:covariant-approach}

Using covariants, we can sometimes reduce the problem of 
descent for $\Xm_{f}$ to a descent problem for a curve 
of lower genus.

\begin{theorem}
\label{th:covdescent}
Assume that there exists a covariant $C$ of order 
$r \geq 4$ such that $c=C(f)$ is a hyperelliptic polynomial,
and let $\Xm_c\colon y^2=c(x)$ be the associated curve. 
Then $\MG_{\Xm_c} \subset \MG_{\Xm_f}$.

Moreover, if $\Xm_c$ is hyperelliptically defined over $\MG_{\Xm_c}$, then
$\Xm_f$ is hyperelliptically defined over an extension of $\MG_{\Xm_f}$
of degree at most $[ \Aut_K c : \Aut_K f ]$. 

In particular, if $\Aut_K c =\Aut_K f$ and if $\Xm_c$ is hyperelliptically
defined over~$\MG_{\Xm_c}$, then $\Xm_f$ is hyperelliptically defined over
$\MG_{\Xm_f}$.
\end{theorem}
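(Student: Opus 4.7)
The plan is to split the theorem into its three assertions and handle them in turn.

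For the inclusion $\bfM_{\Xm_c} \subset \bfM_{\Xm_f}$, I would take any $\sigma \in \Aut(K)$ fixing $\bfM_{\Xm_f}$ and show it also fixes $\bfM_{\Xm_c}$. By definition of $\bfM_{\Xm_f}$ there is a $K$-isomorphism $\Xm_f \simeq {}^\sigma\Xm_f$, which induces some $M \in \Isom(f, f^\sigma)$. Proposition~\ref{prop:ftoc} applied to $C$ then puts $M$ inside $\Isom(c, c^\sigma)$, so $\Xm_c \simeq {}^\sigma\Xm_c$ and $\sigma$ fixes $\bfM_{\Xm_c}$; passing to fixed fields yields the containment.

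For the main assertion, the idea is to use the assumed hyperelliptic descent of $\Xm_c$ to put $c$ into a rational form and then control how $f$ behaves under the same transport. Choose $\frakc \in \bfM_{\Xm_c}[x]$ and $A \in \GL_2(K)$ with $A.c \sim \frakc$, and set $\tilde{f} = A.f$ and $c' = C(\tilde{f})$. Covariance gives $c' \sim A.c \sim \frakc$, so $c'$ is defined over $\bfM_{\Xm_c}$ up to scalar; in particular $(c')^\sigma \sim c'$ for every $\sigma \in \Gal(K/\bfM_{\Xm_f})$, since part~1 gives $\bfM_{\Xm_c} \subset \bfM_{\Xm_f}$.

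The core of the argument is then an orbit count. For each such $\sigma$, the set $\Isom(\tilde{f}, \tilde{f}^\sigma)$ is non-empty (because $\sigma$ fixes $\bfM_{\Xm_f}$) and, by Proposition~\ref{prop:ftoc}, sits inside $\Isom(c', (c')^\sigma) = \Aut(c')$. Hence $\tilde{f}^\sigma \sim g_\sigma . \tilde{f}$ for some $g_\sigma \in \Aut(c')$ unique modulo $\Aut(\tilde{f})$, so the $\Gal(K/\bfM_{\Xm_f})$-orbit of the $\sim$-class of $\tilde{f}$ injects into $\Aut(c')/\Aut(\tilde{f})$. Conjugation by $A$ inside $\PGL_2(K)$ identifies this quotient with $\Aut_K(c)/\Aut_K(f)$, so the orbit has at most $\#\Aut_K(c)/\#\Aut_K(f)$ elements. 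By orbit--stabilizer, the fixed field $L$ of the stabilizer of $[\tilde{f}]_\sim$ satisfies $[L:\bfM_{\Xm_f}] \leq \#\Aut_K(c)/\#\Aut_K(f)$. To upgrade a $\sim$-class over $L$ into an honest polynomial, note that the scalars $\lambda_\sigma \in K^*$ with $\tilde{f}^\sigma = \lambda_\sigma \tilde{f}$ for $\sigma \in \Gal(K/L)$ form a $1$-cocycle, hence a coboundary $\lambda_\sigma = \mu/\mu^\sigma$ by Hilbert~90; then $\frakf = \mu\tilde{f} \in L[x]$ and $y^2 = \frakf(x)$ is a hyperelliptic model of $\Xm_f$ over $L$. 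The third assertion is simply the case where this index is $1$, so $L = \bfM_{\Xm_f}$.

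The step I expect to require the most care is the bookkeeping of scalar factors forced by $\sim$-equivalence: the relation $(c')^\sigma \sim c'$ is only up to scalar, the ambiguity of $g_\sigma$ modulo $\Aut(\tilde{f})$ must be respected in the orbit count, and the Hilbert~90 rescaling must be set up so that $\mu\tilde{f}$ genuinely lies in $L[x]$ and moreover defines a curve $K$-isomorphic to $\Xm_f$ (the quadratic twist $y \mapsto \sqrt{\mu}\, y$ over $K$ takes care of the latter).
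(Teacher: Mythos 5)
Your argument is correct and follows essentially the same route as the paper's proof: the same use of Proposition~\ref{prop:ftoc} to get $\bfM_{\Xm_c} \subset \bfM_{\Xm_f}$, and the same key counting step, namely injecting the Galois orbit of the transported form $A.f$ (your $\tilde{f}$, the paper's $h$) into the coset space $\Aut_K(\frakc)/\Aut_K(A.f)$, whose size is $\#\Aut_K(c)/\Aut_K(f)$. The only cosmetic difference is that the paper dispenses with your final Hilbert~90 rescaling by normalizing $h = A.f$ to be monic, so that $h \sim h^{\sigma}$ immediately upgrades to $h = h^{\sigma}$ and $h$ is literally defined over the fixed field of the stabilizer.
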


\begin{proof}
Let $\sigma$ be an element of the group $\Gamma=\Gal(K/\MG_{\Xm_f})$.
Then there exists a $K$-isomorphism between $\Xm_f$ and 
$\Xm_f^{\sigma}$ which induces a matrix $M \in \Isom_K(f,f^{\sigma})$.
Since we have the inclusion
$\Isom_K(f,f^{\sigma}) \subset \Isom_K(c,c^{\sigma})$ by 
Proposition~\ref{prop:ftoc}, we get a $K$-isomorphism between
$\Xm_c$ and $\Xm_c^{\sigma}$, so $\MG_{\Xm_c} \subset \MG_{\Xm_f}$.
  
Assume now that $\Xm_c$ can be hyperelliptically defined
over $\MG_{\Xm_c}$ as $\Xm_{\frakc}$ for some form
$\frakc \in \MG_{\Xm_c}[x]$. There exists $A \in \Isom_K(c,\frakc)$.
Let us consider $h=A.f$, which we can assume to be
monic. We want to prove that $h$ is defined over an extension 
of $\MG_{\Xm_f}=\MG_{\Xm_{h}}$ of degree at most 
$$\ell=\#(\Aut_K c/\Aut_K f) = \#(\Aut_K \frakc/\Aut_K h).$$
First note that $C(h) \sim A.C(f) \sim \frakc$.
Let $H \subset \Gamma$ be the subgroup consisting of the
automorphisms $\sigma$ such that $h \sim h^{\sigma}$. 
Since we have assumed that $h$ is monic, we even have $h = h^{\sigma}$.
We must show that $\# \Gamma/H \leq \ell$. 
To this end, we note that $\frakc^{\sigma}=\frakc$
for all $\sigma \in \Gamma$.  Hence we can associate to 
each $\sigma \in \Gamma$ a matrix 
$M \in \Isom_K(h,{h}^{\sigma}) \subset \Aut_K \frakc$.
In fact, this association gives rise to a well-defined 
class of $\Aut_K \frakc/\Aut_K h$, so we have defined 
a map $\rho$ from $\Gamma$ to $\Aut_K \frakc/\Aut_K h$.
If $\rho(\sigma)=\rho(\sigma')$ then we have $h^{\sigma} \sim h^{\sigma'}$,
and hence $\sigma^{-1} \sigma' \in H$. Therefore $\rho$ induces an injective map
from $\Gamma/H$ to $\Aut_K \frakc/\Aut_K h$, and we get our result.
\end{proof}

To use the theorem in a constructive way, we need a covariant
that has a finite automorphism group and for which we know
how to find a hyperelliptic model over its field of moduli.
We give some examples in Sections~\ref{sec:applications-genus-3}
and~\ref{sec:fuertes-examples}.

\begin{remark}
The fields of moduli of $\Xm_f$ and $\Xm_c$ may be
different, even when the automorphism groups of the forms 
are the same. For instance, let $r$ be a root of $t^2+2t+16/9=0$
and let $f$ be the form 
$$f = (x^4 + r x^2 z^2 + z^4) (x^4 - 3 r x^2 z^2 + z^4);$$
then the field of moduli of $f$ is $\QQ(r)$, while
the field of moduli of
$$c=(f,f)_6= (16/49) x^4 + (992/441) x^2 + (16/49)$$ is~$\QQ$.
Using the programs of~\cite{\LReleven}, one sees that
$\Aut_K f=\Aut_K c \simeq \DG_4$.
\end{remark}

\subsection{Application to genus-\texorpdfstring{$3$}{3} hyperelliptic curves}
\label{sec:applications-genus-3}

In~\cite{\LReleven}, the two first authors give algorithms
for reconstructing genus-$3$ hyperelliptic models from 
given invariants.  These models are defined over the
field of moduli, with the notable exception of the 
$2$-dimensional stratum $\CG_2^3$ and the
$3$-dimensional stratum~$\DG_4$. 
As an illustration of our strategy, we see how our
method applies in these remaining cases.

\subsubsection{Descent of curves with automorphism group \texorpdfstring{$\CG_2^3$}{C2 cubed}}
\label{sec:c23-case}

Let $\Xm/K\colon y^2=f(x)$ be a genus-$3$ hyperelliptic curve 
with automorphism group isomorphic to $\CG_2^3$.  Since the
reduced automorphism group is not cyclic, \cite{\huggins} 
shows that $\Xm$ can be hyperelliptically defined over
its field of moduli.  In \cite{\LReleven}, we showed how to 
construct a hyperelliptic equation for a model over an 
extension of the field of moduli of degree at most~$3$.
Using covariants, we can now give a method to get an
equation over the field of moduli itself.

In Section~\ref{sec:all-hyper-genus}, we checked that 
at least one of the quartic covariants in the list 
$\{ C_{2,4}(f),$ $C_{3,4}(f),$ $C_{4,4}(f) \}$ has nonzero 
discriminant.  Moreover, by Proposition~\ref{prop:autquartics},
we see that the automorphism group of such a quartic
is equal to $\DG_4$ if the quartic invariants $I$ and $J$
are both nonzero. Using some formal computations 
(see the Magma scripts available at the URL listed in
the Introduction), we checked that it is always the 
case that at least one of the three covariants has nonzero
discriminant and $I$ and $J$ nonzero.
Since $\Aut_K(f) \simeq \DG_4$ we can use the approach of 
Theorem~\ref{th:covdescent} to find a hyperelliptic equation 
$y^2=\frakf(x)$ over the field of moduli. The procedure
can actually be applied to a generic element of the 
family, but the result is too large to be written down here; 
instead, we present an example.

\begin{example}
\label{ex:c23g3}
When we evaluate the parametrization formulas given
in \cite{\LReleven} for the stratum $\CG_2^3$ at 
$t=0$ and $u=1$, we find the rational point
\begin{multline*}
(j_2: j_3:\ldots:j_{10})\\
     = \left(0 :0 :-\frac{   25}{     98}
                  :-\frac{   25}{     98}
                  :-\frac{  225}{   2744}
                  :-\frac{   25}{   1372}
                  :-\frac{  225}{ 134456}
                  : \frac{ 1125}{  76832}
                  : \frac{15125}{3764768} \right)
\end{multline*}
in the moduli space. This gives rise to the curve $\Xm\colon y^2 = f$ with
\begin{multline*}
   f =  (-32 \alpha^2 +420 \alpha - 2275) x^8 / 160
      + (-12 \alpha^2 +140 \alpha -  700) x^6 / 25\\
      + \alpha x^4 + x^2 + (16 \alpha^2 + 280 \alpha - 2275)/12250
\end{multline*}
over $\QQ(\alpha)$, where
$\alpha^3-(35/2)\alpha^2 + (1925/16)\alpha - (18375/64) = 0.$
By Proposition~\ref{prop:modvsinvariant}, we have $\MG_{\Xm}=\QQ$.
  
Let $c$ be the covariant $(f, f)_6$. We find
$$
      c =  \frac{-16 \alpha^2+180 \alpha-875}{280} x^4
         + \frac{24 \alpha^2-630 \alpha+3150}{1225} x^2 z^2
         + \frac{4 \alpha+35}{490} z^4,
$$
so that $I = -75/49$ and $J = -2025/343$.  It follows that 
$\frakc = x^3 z+{{25}/9}\ x z^3+{{25}/9}\ z^4$
is $\GL_2(\Qbar)$-equivalent to $c$, 
is defined over $\MG_{\Xm} = \QQ$, and 
satisfies $\Aut_{\Qbar} \frakc\simeq \DG_4$.
The direct approach of Section~\ref{sec:classical-approach}
explicitly finds a $\Qbar$-isomorphism $M$ between $c$
and~$\frakc$.  Its inverse $M^{-1}$ is equal to $(m_{i,j})_{i,j}$, where
\begin{align*}
  m_{11} &= 110250,\\
  m_{12} &= ( 3360 \alpha^2-58800 \alpha+147000 )  \beta^2
            -16800 \alpha^2+ 147000 \alpha-18375, \\
  m_{21} &= ( -2064 \alpha^2+24780 \alpha-60900 )  \beta^3
            + ( -3120 \alpha^2+ 67200 \alpha-375375 )  \beta,      \\
  m_{22} &=  ( -5840 \alpha^2+74900 \alpha-280000 )  \beta^3+ ( 16880 
          \alpha^2-173600 \alpha+487375 )  \beta .
\end{align*}
Here $\beta$ satisfies
$$ \beta^4+ \frac{ 32 \alpha^2-280 \alpha+350 }{175} \beta^2-
    \frac{176 \alpha^{ 2}-1820 \alpha+7350}{175} = 0.$$
We compute the monic form $\frakf \sim M.f$:
\begin{multline*}
\frakf =  x^8+160x^7-560x^6-2800x^5+64750x^4-91000x^3\\
            +3010000x^2-2225000x-9696875.
\end{multline*}
So $y^2=\frakf(x)$ is a model of $\Xm$ over $\MG_{\Xm}=\QQ$.
\end{example}

\subsubsection{Descent of curves with automorphism group $\DG_4$}
\label{sec:d4-case}
It is proved in \cite[Chapter~5]{\hugginsphd} that
there may be an obstruction for a genus-$3$ hyperelliptic
curve over $K$ with automorphism group isomorphic to $\DG_4$ 
to have a model over its field of moduli. In \cite{\LReleven},
we were able to construct a model of such curves over an
extension of the field of moduli of degree at most~$8$. 
Using Theorem~\ref{th:covdescent}, we find:

\begin{proposition}
Let $\Xm_f$ be a genus $3$ hyperelliptic curve over $K$
with automorphism group isomorphic to $\DG_4$. Then there
exists an explicit model of $\Xm$ over an at most
quadratic extension of $\MG_{\Xm}$.
\end{proposition}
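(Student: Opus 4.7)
The plan is to apply Theorem \ref{th:covdescent} with a carefully selected quartic covariant $c = C(f)$. Since $\Aut_K(f) \simeq \DG_4$ has order $4$, that theorem will yield a hyperelliptic descent of $\Xm_f$ to an extension of $\bfM_{\Xm_f}$ of degree at most $\#\Aut_K(c)/4$, provided $\Xm_c$ is hyperelliptically defined over $\bfM_{\Xm_c}$. To reach the quadratic bound I therefore want to exhibit a $c$ with $\#\Aut_K(c) \leq 8$.

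First I would work with the five quartic covariants $C_{2,4}(f), C_{3,4}(f), C_{4,4}(f), C_{4,4}'(f), C_{5,4}(f)$ listed for the $\DG_4$ stratum in Section \ref{sec:all-hyper-genus}. For any separable $c$ in this list, the covariance identity forces $\DG_4 = \Aut_K(f) \subset \Aut_K(c)$, so Proposition \ref{prop:autquartics} leaves only the three possibilities $\Aut_K(c) \in \{\DG_4, \DG_8, \AG_4\}$, with the unwanted case $\AG_4$ (of order $12$) characterized by $I(c) = 0$. The crucial step is to verify, in the spirit of Proposition \ref{prop:gen3covs} and using the two normal models $y^2 = x^8+ax^6+bx^4+cx^2+1$ and $y^2 = (x^2-1)(x^6+ax^4+bx^2+c)$ that parametrize the $\DG_4$ stratum, that no curve on this stratum simultaneously satisfies $\Disc(C_i(f)) \cdot I(C_i(f)) = 0$ for every $C_i$ in the list. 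Concretely, one computes a Gr\"obner basis of the ideal generated by these five products in $\QQ[a,b,c]$ and checks that its variety does not meet the open subset of the stratum where $\Aut_K(f) \simeq \DG_4$ (i.e., one excludes the smaller strata where the automorphism group strictly enlarges).

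Once such a $c$ is secured, Proposition \ref{prop:iso-quartic} provides an explicit $K$-iso\-mor\-phism between $c$ and the canonical form $\frakc$ of \eqref{quarticsplit}, which lives over $\bfM_{\Xm_c} \subset \bfM_{\Xm_f}$; in particular $\Xm_c$ is hyperelliptically defined over $\bfM_{\Xm_c}$. Theorem \ref{th:covdescent} then yields an explicit hyperelliptic model of $\Xm_f$ over an extension of $\bfM_{\Xm_f}$ of degree at most $\#\Aut_K(c)/\#\Aut_K(f) \leq 8/4 = 2$, and the descent matrix is assembled exactly as in Example \ref{ex:c23g3}. The principal obstacle I expect is the Gr\"obner basis verification in the second paragraph: ruling out simultaneous vanishing of $\Disc(C_i(f)) \cdot I(C_i(f))$ across the entire $\DG_4$ stratum, since a single point where every candidate covariant were either singular or $\AG_4$-symmetric would inflate the bound from $2$ to $3$.
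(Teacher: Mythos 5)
Your overall route is the paper's: test the five quartic covariants $C_{2,4}(f)$, $C_{3,4}(f)$, $C_{4,4}(f)$, $C_{4,4}'(f)$, $C_{5,4}(f)$ on the $\DG_4$ stratum, certify by a Gr\"obner-basis computation that one of them is separable with controlled automorphism group, and then feed it into Proposition~\ref{prop:autquartics} and Theorem~\ref{th:covdescent}. However, there is a genuine gap in the numerology. The quantity $\#\Aut_K(f)$ appearing in Theorem~\ref{th:covdescent} is the automorphism group of the \emph{binary form} $f$, i.e.\ a subgroup of $\PGL_2(K)$, which is the \emph{reduced} automorphism group $\overline{\Aut}(\Xm_f)$ of the curve. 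For the stratum in question, $\Aut_K(\Xm_f)\simeq\DG_4$ (order $4$, containing the hyperelliptic involution) but $\overline{\Aut}(\Xm_f)\simeq\CG_2$, so $\#\Aut_K(f)=2$, not $4$. Consequently your bound $\#\Aut_K(c)/\#\Aut_K(f)\leq 8/4=2$ is wrong: if you only exclude the case $I(c)=0$ and allow $\Aut_K(c)\simeq\DG_8$, the theorem gives an extension of degree up to $8/2=4$, not $2$ (and the $\AG_4$ case would give $12/2=6$, not $3$).

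To recover the quadratic bound you must force $\#\Aut_K(c)=4$, i.e.\ $\Aut_K(c)\simeq\DG_4$, which by Proposition~\ref{prop:autquartics} means excluding \emph{both} $I(c)=0$ and $J(c)=0$. This is exactly what the paper's computation certifies: for every curve on the $\DG_4$ stratum, at least one of the five covariants $c=C_i(f)$ has $\Disc(c)\neq 0$, $I(c)\neq 0$ \emph{and} $J(c)\neq 0$ simultaneously. So your Gr\"obner-basis step should be run on the products $\Disc(C_i(f))\cdot I(C_i(f))\cdot J(C_i(f))$ rather than $\Disc(C_i(f))\cdot I(C_i(f))$. Note also that your appeal to Proposition~\ref{prop:iso-quartic} for the explicit isomorphism already presupposes $J(c)\neq 0$ (that proposition assumes both invariants nonzero), so your argument as written could not even produce the explicit descent matrix in the $\DG_8$ case you were willing to tolerate. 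With the strengthened nonvanishing condition, the rest of your argument goes through and coincides with the paper's proof.
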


\begin{proof}
Applying the methods of Proposition~\ref{prop:gen3covs} 
to the stratum $\DG_4$ shows that at least one of the
five binary covariants $C_{2,4}(f)$, $C_{3,4}(f)$, $C_{4,4}(f)$, 
$C'_{4,4}(f)$, $C_{5,4}(f)$ has not only a discriminant
different from $0$, but also $I(f) \neq 0$ and $J(f) \neq 0$.  
(The computations can be found in the the Magma scripts available
at the URL given in the Introduction.)
One then combines Proposition~\ref{prop:autquartics} and
Theorem~\ref{th:covdescent}.
\end{proof}
We plan to investigate how to apply the theory of twists to the binary
quartics used in the application of Theorem~\ref{th:covdescent} to give a
precise characterization of the obstruction to the descent on the field of
moduli.

\subsection{Application to a family of Fuertes-Gonz\'alez-Diez
            in genus \texorpdfstring{$5$}{5}}
\label{sec:fuertes-examples}

Let $k$ be the degree-$3$ Galois extension of $\QQ$ defined by the
irreducible polynomial $t^3 - 3t + 1$. Let $r_1, r_2, r_3$ be the
roots of this polynomial in~$k$. Then, as in~\cite{\fuertesgonzalez},
we can consider the family
\begin{equation}
\label{fgdfam}
y^2 = \prod_{i = 4}^6 
   \Bigl(x^4-2  \Bigl(1-2\,  \frac{r_3-r_1}{r_3-r_2} \, \frac{q_i-r_2}{q_4-r_1}\Bigr)  x^2+1\Bigr)
\end{equation}
of genus-$5$ hyperelliptic curves, with $q_4, q_5, q_6$ in~$\QQ$.
It was proved in \cite{\fuertesgonzalez} that the members of this 
family have field of moduli equal to $\QQ$ and automorphism group
isomorphic to $\CG_2^3$.  Moreover, it was claimed in 
\cite{\fuertesgonzalez} that these curves cannot be hyperelliptically
defined over $\QQ$, in contradiction with~\cite{\huggins}.  However,
the proof turns out to contain a subtle error.  Still, the explicit 
descent of any of the member of the family was extremely hard.

As in Example~\ref{ex:c23g3}, we can use Theorem \ref{th:covdescent}
to construct an explicit descent for the curves in this family. 
For this particular family, the descent can even be performed
uniformly to yield a general expression in $q_4, q_5, q_6$.
Let $F = k (q_4,q_5,q_6)$ be the rational function field 
over $k$ in three indeterminates, and define the
binary quartic form $f \in F[x,z]$ as the homogenization
of the right hand side of Equation~\eqref{fgdfam}. 
Let $c$ be the transvectant $(f,f)_{10}$.  Then $c$ is a 
covariant of order $4$ with nonzero discriminant and 
nonzero $I(c)$ and $J(c)$, and hence has automorphism group~$\DG_4$.
The field of moduli of $X_c$ is contained in the field of moduli
of $X_f$, which is a subfield of $\QQ(q_4,q_5,q_6)$;
therefore the quartic $\mathfrak{c}$ as in Equation~\eqref{quarticsplit}
is defined over $\QQ (q_4 , q_5, q_6)$ and is
$\GL_2(\Fbar)$-equivalent to $c$. 

Now let $L$ be the degree-$4$ extension of $F$
defined by the dehomogenization of~$c$.  From 
Proposition~\ref{prop:iso-quartic}, we can explicitly 
construct an $L$-isomorphism between $c$ and~$\frakc$.
This transformation gives a descent of the curve 
corresponding to~$c$, which by Theorem~\ref{th:covdescent}
also yields a descent of the curve corresponding to~$f$. 
The resulting expression, though indeed defined over the 
rationals, is huge and impossible to give here. 
(The computations above, their final result, and the 
program to compute the descent of any given
specialization are available at the URL listed 
in the Introduction.)  However, we can give an 
example for a specialization.

\begin{example}
Take $q_4 = 1$, $q_5 = 2$, $q_6 = 3$. The hyperelliptic equation over $\QQ$ is
\begin{multline*}
    y^2 =   199950247575 x^{12} - 296949924611352 x^{11} - 66659816245812750 x^{10} \\
        - 15421975495507360656 x^9 + 2005635519424553708745 x^8 \\
        + 130792088864772419461200 x^7 + 44148454149188354317253820 x^6 \\
        - 9718847083908693649803959136 x^5 + 93749472927036312839424054441 x^4 \\
        + 86331359417888600607650948443656 x^3 \\
        - 7423912080663182513045938205161326 x^2 \\
     + 249511197641168404939510946041515184 x \\
     - 3006656143858472317763973580984260681 .
\end{multline*}
\end{example}

\section*{Acknowledgments}
The authors acknowledge support by grant ANR-09-BLAN-0020-01.

\nocite{Shaska2005}
\nocite{MEGA90}
\nocite{StonyBrook1969}
\nocite{ANTS2004}
\nocite{FOCS2008}

\newcommand{\etalchar}[1]{$^{#1}$}
\newcommand{\SortNoop}[1]{}

\end{document}